\theoremstyle{plain}\newtheorem{definition}{Definition}[section]
\theoremstyle{definition}\newtheorem{theorem}{Theorem}[section]
\theoremstyle{plain}\newtheorem{lemma}[theorem]{Lemma}
\theoremstyle{plain}
\theoremstyle{plain}\newtheorem{proposition}[theorem]{Proposition}
\theoremstyle{remark}\newtheorem{remark}{Remark}[section]
\newcommand{\norm}[1]{\left\|#1\right\|}
\newcommand{\Div}{\mathrm{div}}
\newcommand{\esssup} {\mathop{\rm ess\,sup}}
\newcommand{\be}{\begin{equation}}
\newcommand{\ee}{\end{equation}}
 \newcommand{\ba}{\begin{aligned}}
 \newcommand{\ea}{\end{aligned}}
  \newcommand{\f}{\frac}
  \newcommand{\ben}{\begin{enumerate}}
   \newcommand{\een}{\end{enumerate}}
\newcommand{\te}{\text}
\newcommand{\Rmnum}[1]{\expandafter\@slowromancap\romannumeral #1@}
\begin{document}
\title{On
  possible time singular  points and eventual regularity of  weak solutions  to  the  fractional
 Navier-Stokes equations }
\date{}
\author{{\small \textbf{Quansen Jiu},$^{1}$\thanks{E-mail: jiuqs@mail.cnu.edu.cn}\quad
  \textbf{Yanqing Wang}$^{1}$\thanks{ E-mail: wangyanqing20056@gmail.com.} }}
 \maketitle {\small $^1$
School of Mathematical Sciences, Capital Normal University, Beijing
100048, P. R. China}

\begin{abstract}
In this paper,   we intend to reveal  how  the
fractional dissipation
  $(-\Delta)^{\alpha}$ affects
the regularity of
weak solutions to the 3d generalized  Navier-Stokes equations. Precisely, it will be shown that  the $(5-4\alpha)/2\alpha$ dimensional Hausdorff measure of
  possible time singular  points of  weak solutions on the interval $(0,\infty)$
is zero
when $5/6\le\alpha< 5/4$. To  this end, the eventual regularity for the weak solutions is firstly established in the  same range of $\alpha$.
It is worth noting that when the dissipation index $\alpha$ varies from $5/6$ to  $ 5/4$, the corresponding Hausdorff  dimension  is from $1$ to $0$.  Hence, it seems that the
   Hausdorff  dimension obtained is optimal.
   Our results rely on  the fact that the space  $H^{\alpha}$  is the critical space or subcritical space to this system when $\alpha\geq5/6$.


    \end{abstract}

\noindent {\bf MSC(2000):}\quad     35Q30,   35A02, 76D03. \\
{\bf Keywords:} Navier-Stokes equations,  fractional dissipation,
weak solutions, Hausdorff dimension,  eventual regularity.

\section{Introduction}
\setcounter{section}{1}\setcounter{equation}{0}

The incompressible time-dependent Navier-Stokes equations can be written as
\begin{align}
&u_{t}-\nu\Delta u+u\cdot \nabla u+\nabla p=0,~~\Div\,u=0,~(x,t)\in\Omega\times(0,\,T),\label{NS}
\end{align}
where the domain $\Omega$ denotes the whole space
$\mathbb{R}^{n}$ or a torus $\mathbb{T}^{n}$ with $n\geq2$.
The unknown vector field $u$ describes  velocity  of the flow, the
scalar function $p$ stands for the pressure of the fluid and  the positive constant $\nu $ is the
 viscosity coefficient.  We supplement problem \eqref{NS} with the divergence-free initial data $u(x,0)$.

It is well-known that the
Navier-Stokes system enjoys two fundamental properties. One is energy equality (inequality)
\begin{equation}\label{e:leray1}
\ba
E(u)=\esssup_{t}\int_{\mathbb{R}^{n}} |u (x,t)|^2\, dx+2\int^{\infty}_{0}  \int_{\mathbb{R}^{n}}
&|\nabla u(x,\tau)|^2 \,dx d\tau
\leq\int_{\mathbb{R}^{n}} |u(x,0)|^2 \,dx, \\
\ea
\end{equation}
for smooth solution (weak solutions).

The other one is the scaling transformation, namely, if the couple $\big(u(x,t), p(x,t)\big)$  solves problem \eqref{NS}, then so does $\big(u_\lambda(x,t), p_\lambda(x,t)\big)$ with
\be
\label{scaling}u_{\lambda}=\lambda u(\lambda x,\lambda^{2}t)\quad\text{and}\quad p_{\lambda}=\lambda ^{2}p(\lambda x,\lambda^{2}t).
\ee
This induces us to investigate problem \eqref{NS} in the critical spaces whose
 norm is invariant under scaling \eqref{scaling}. Thus, a natural candidate  is homogenous Sobolev space $\dot{H}^{(n-1)/2}$ or Lebesgue space $L^{n}$. A Bananch  space $X$ is said to be a supercritical space to \eqref{NS} if $\|u_{\lambda}\|_{X}\rightarrow\infty$  as $\lambda$ tends to $0$. The rest  are called as the subcritical spaces.
 We easily find that the energy space $L^{2}$ is a   supercritical space
 to system \eqref{NS}
  when the  spatial dimension is strictly greater than two.
Formally, based on \eqref{scaling}, we see that
\be\label{dimensional analysis}
E(u_{\lambda})=\esssup_{\lambda^{2}t}
 \int_{\mathbb{R}^{n}}\lambda^{2}|u(\lambda x,\, \lambda^{2}t)|^{2}\,dx
+2\int^{\infty}_{0}\int_{\mathbb{R}^{n}}\lambda^{4}|\nabla  u(\lambda x,\lambda^{2}\tau)|^{2}\,dxd\tau\\
=\lambda^{2-n}E(u),
\ee
which implies that $ E(u_{\lambda})\rightarrow\infty $ as
$\lambda\rightarrow0$.
In this sense, we may say that the 2d Navier-Stokes equations is critical and the Navier-Stokes equations is  supercritical when
the  spatial dimension is
greater than or equal to three.

 The global existence of weak solutions was successfully   proved by Leray on the Cauchy problem and by Hopf for the Dirichlet problem. Both of them made full use of
 the energy inequality \eqref{e:leray1}. Leray also proved that the weak solution of the 2d Navier-Stokes equations is regular  in \cite{[Leray]}.
Although there are   extensive studies  on  the  regularity  of  weak solutions to the 3d   Navier-Stokes equations
(See, e.g.,  \cite{[CKN],[ESS],[Galdi],[H],[Scheffer],[Scheffer2],[Serrin2],[Teman],[Teman2],[WW]}), the  regularity  of  weak solutions  is partially known until now. Particularly,
  Leray \cite{[Leray]} proved
 that   one dimension Lebesgue measure of the set of the possible time irregular  points for  weak solutions to the 3d
Navier-Stokes equations is zero. Moreover, Leray showed that every weak solution   becomes smooth after a large time, namely, eventual regularity of the
weak solutions. In \cite{[Scheffer]},
Scheffer  improved    Leray's upper bound of the
Hausdorff dimension of the possible time singular set of  weak solutions
 to $1/2$  (see also \cite[Section 6]{[Galdi]} and \cite[Chparter 5]{[Teman]}).
In this direction,
Scheffer \cite{[Scheffer2]} also
investigated the Hausdorff dimension of the space time
singular set of  weak solutions satisfying the local energy
inequality, which is the so-called partial regularity of suitable weak solutions.  The
well-known CKN theorem \cite{[CKN]} showed that one dimensional
Hausdorff measure of the possible
space-time singular points of  suitable weak solutions
 to the 3d Navier-Stokes equations  is zero. For more general results on partial regularity  of suitable weak solutions to  the non-stationary and stationary Navier-Stokes system, we refer the reader to the paper \cite{[WW]} and  the list of quotations there.

 Generally speaking,
it seems that there has not existed effective approach to deal with the supercritical equations so far.
Concomitantly, an interesting question is:
 Could one modify the
3d supercritical Navier-Stokes equations to become a critical (more regular)? As a matter of fact, this had been done   by Lions  in 1960s.
More precisely, Lions \cite{[Lions],[Lions2]} introduced the following equations involving fractional dissipation
 \be\label{GNS}
\left\{\ba
&u_{t}+\nu\Lambda^{2\alpha}u+u\cdot\nabla u+\nabla p=0,~~\te{div}~ u=0,\\
&u(0)=u(0,x).\ea\right.
\ee
The operator $\Lambda^{2\alpha}=(-\Delta)^{\alpha}$ is defined by
$\widehat{\Lambda^{2\alpha} f}(\xi)=|\xi|^{2\alpha}\hat{f}(\xi)$
in the  whole space,   where $\hat{f}(\xi)=\frac{1}{(2\pi)^{n}}\int_{\mathbb{R}^{n}}f (x)e^{-i\xi\cdot x}\,dx$; and by
$\widehat{\Lambda^{2\alpha} f}(k)=|\xi|^{2\alpha}\hat{f}(k),~k\in \mathbb{Z}^{n} $ on the torus, where $
\hat{f}(k)=\frac{1}{(2\pi)^{n}}\int_{\mathbb{T}^{n}}f (x)e^{-ik\cdot x}\,dx$. Here,
$\alpha\geq 0$ is  said to be the dissipation index.
Exactly as the Navier-Stokes equations, note that  if $u(x,t)$ solves \eqref{GNS},
$
u_{\lambda}=\lambda^{2\alpha-1}u(\lambda
x,\lambda^{2\alpha}t)
  $ is also a solution of \eqref{GNS} for any $\lambda\in \mathbb{R}^{+}$. Formally,
the corresponding energy is
\be\label{dimensional analysis2}\ba
E_{\alpha}(u_{\lambda})&=\esssup_{\lambda^{2\alpha}t}
\int_{\mathbb{R}^{n}}\lambda^{4\alpha-2}|u(\lambda
x,\lambda^{2\alpha}t) |^{2}\, dx+2\int^{\infty}_{0}\int_{\mathbb{R}^{n}}\lambda^{6\alpha-2}|\Lambda^{\alpha}u(\lambda
x,\lambda^{2\alpha}\tau)|^{2}\,dxd\tau\\
&=\lambda^{4\alpha-(n+2)}\Big\{
\esssup_{t}\int_{\mathbb{R}^{n}}|u(x,t)|^{2}\, dx+2\int^{\infty}_{0}\int_{\mathbb{R}^{n}}|\Lambda^{\alpha}u(x,\tau)|^{2}\,dxd\tau\Big\}\\
&=\lambda^{4\alpha-(n+2)}E_{\alpha}(u).
\ea\ee
It follows that $ E_{\alpha}(u_{\lambda})\rightarrow\infty $ as
$\lambda\rightarrow0$ when $\alpha<(n+2)/4 $. Just as the above, we say
that equations \eqref{GNS} is supercritical  if $\alpha<(n+2)/4$, critical
for $\alpha=(n+2)/4$
 and subcritical with   $\alpha>(n+2)/4$ .
Lions obtained  the global regular solution to the critical or subcritical    equations  \eqref{GNS} in \cite{[Lions2]}.

When $1<\alpha<5/4$, the three-dimensioanl fractional Navier-Stokes system is still supercritical equations and their global well-posedness theories
remain  open.   As pointed out by Katz and Pavlovi\'c in \cite{[KP]}, any improvement in the exponent $5/4$ could be viewed as genuine progress.
It is remarkable that
 the Navier-Stokes
equations with  fractional    dissipation have been studied from a mathematical viewpoint and some  interesting results
have been  obtained in \cite{[CMW],[KP],[MYZ],[Tao],[Wu1],[Wu3],[Wu2],[Wu4]}
and the references therein.
The existence and uniqueness of solutions to the generalized Navier-Stokes equations \eqref{GNS} in Besov spaces were established by Wu in \cite{[Wu3],[Wu2]}.
Recently,  Tao \cite{[Tao]} showed the global regularity for a logarithmically supercritical hyperdissipative Navier-Stokes system (see also \cite{[Wu4]}).
In  \cite{[KP]}, it was shown that  the Hausdorff dimension of the space
singular set at time of first blow up for smooth solution to equations \eqref{GNS}
is at most $5-4\alpha$ as $1<\alpha<5/4$.

In this paper, one target of our work is to address the question:
what is the  precise  effect of the   fractional  dissipation
to the upper bound on the Hausdorff dimension of the potential time singular set for the weak solutions to \eqref{GNS}.
We shall prove that  the $(5-4\alpha)/2\alpha$ dimensional Hausdorff measure of
  possible time singular  points of  weak solutions on the interval $(0,\infty)$
is zero in the case when $5/6\leq\alpha<5/4$.
This result not only is  an interpolation  between the  Scheffer's (Leray's)  Hausdorff dimension of the possible time singular set of weak solutions to the Navier-Stokes equations ($\alpha=1$)  and Lions's global  solvability
for the hyper-dissipative case  ($\alpha\geq5/4$), but also further  generalizes  Scheffer's (Leray's) classic work in the sense that $5/6\leq\alpha<1$.
 This   gives   an accurate  relationship between the
Hausdorff dimension of the potential
 time singular set of weak solutions and the
dissipation index  $\alpha$, which provides a perspective of the regularity of weak solutions to the fractional
 Navier-Stokes equations.

The other goal is to study the eventual regularity of  weak solutions to \eqref{GNS}.
More precisely, with  Leray's eventual regularization of weak solutions to the 3d Navier-Stokes equations in mind, we want  to explore how far
we may go beyond the standard   Laplacian dissipation $-\Delta$    and still prove the  eventual regularity of  weak solutions to the fractionally-dissipative Navier-Stokes equations.
In particular,
we can show   the eventual regularity of  weak solutions to \eqref{GNS} for $5/6\leq\alpha$.  It is natural to ask whether this result  is  true for $\alpha<5/6$.
This is also partially   motivated by the recently related work    for the supercritical quasi-geostrophic type equation by M. Dabkowski \cite{[Dabkowski]}, Miao and Xue \cite{[MX]} and   L. Silvestre \cite{[Silvestre]}.

In what follows, we study \eqref{GNS} in a periodic domain $\mathbb{T}^{3}=\mathbb{R}^{3}/\mathbb{Z}^{3}$.  The reason why we consider the periodic case  is that it is not obvious whether the strong energy inequality of weak solutions to \eqref{GNS} is valid in whole space when  $\alpha\neq1$.	
Utilizing
 integration by parts, the spatial periodicity of the solution and
the divergence free condition, we can derive that the solution $u$ of \eqref{GNS} satisfies $\frac{d}{dt}\int_{\mathbb{T}^{3}}u(x,t)\,dx=0$.
 In addition, because of
 Galilean invariance of system \eqref{GNS},  without loss of generality,  we consider initial data of zero average, namely, $\int_{\mathbb{T}^{3}}u(x,0)\,dx=0$. This yields that
  $\int_{\mathbb{T}^{3}}u(x,t)\,dx=0$
  for any $t>0$. A function with vanishing spatial average on torus
guarantees us to  use its Sobolev (Gagliardo-Nirenberg)  inequality  like the whole space case.
 The reader is referred to   \cite{[BO]} for more details.

This  paper is organized as follows. In the  second section, we shall present the definition of weak solutions to \eqref{GNS}  for the initial data with finite-kinetic energy    and
 state our main results.
 Section 3 is devoted to establishing the well-posedness theory with initial data in $H^{\alpha}$ for $ \alpha\geq 5/6$.
 Then, the weak-strong uniqueness in the class $u\in L^{\infty}((0,\,T);\,H^{\alpha})\cap L^{2}((0,\,T);\,H^{2\alpha}) $ is  discussed. Based on this,
 we could obtain the
eventual regularity  and the   Hausdorff dimension of the possible singular time points of weak solutions in  Section 4.
Finally, for completeness, an appendix is dedicated to  proving the existence  of weak solutions to \eqref{GNS} for finite energy initial data.

\vspace{5mm} \noindent\textbf{Notation:}
 The classical   Sobolev norm
$\|\cdot\|_{H^{s}}$
 is defined as $
\norm{f} _{H^{s}}= \sum\limits_{k\in \mathbb{Z}^{3}}
(1+|k|^{2})^{s}|\hat{f}(k)|^{2}  $,   $s\in\mathbb{R}$.
We denote by $\dot{H}^{s}$
homogenous Sobolev spaces  with
the norm $
\norm{f} _{\dot{H}^{s}}= \sum\limits_{k\in \mathbb{Z}^{3}}
|k|^{2s}|\hat{f}(k)|^{2}  .$
We shall denote by $\langle f,\,g\rangle$ the $L^{2}$ inner product of $f$ and $g$. For $p\in [1,\,\infty]$, the notation $L^{p}((0,\,T);X)$ stands for the set of measurable function on the interval $(0,\,T)$ with values in $X$ and $\|f\|_{X}$ belongs to $L^{p}(0,\,T)$. We will use $C$ to denote an absolute
   constant which may be different from line to line unless otherwise stated.

\section{ Statement of the result }
\setcounter{section}{2}\setcounter{equation}{0}
In this section, we begin with  the definition of
 weak solutions to the generalized  Navier-Stokes equations~\eqref{GNS}.
 \begin{definition}\label{1.1}
Let $u(0)\in L^{2}$ be a divergence-free vector.
For any $T>0$, we say that $u$ is a weak solution to
equations \eqref{GNS} with $\alpha>0$,  if \ben[\rm(i)]
\item $u\in L^{\infty}((0,\,T);L^{2}(\Omega))\cap L^{2}((0,\,T);\,H^{\alpha}(\Omega))$; \\
\item $u$~solves (\ref{GNS}) in the sense of distributions
\[
\int_{0}^{T}\langle u(t),\phi_{t}(t)\rangle-\nu\langle\Lambda^{\alpha}u(t),
\Lambda^{\alpha}\phi(t)\rangle
-\langle u(t)\cdot\nabla u(t),\phi(t)\rangle \,dt = -\langle u(0),\phi(0)\rangle,\]
 for all $\phi \in
C_{0}^{\infty}((0,\,T)\times\Omega)$ with $\Div\, \phi=0. $
\item $u$ fulfills the strong energy inequality
\be\label{SEI} \|u(t)\|^{2}_{L^{2}}
+2\nu\int^{t}_{\tau}\|\Lambda^{\alpha} u(s)\|^{2}_{L^{2}
}\,ds\leq \|u(\tau)\|^{2}_{L^{2}},
 \ee
for    $\tau=0$ or almost every $\tau>0 $ and  ~$t\in[\tau,\,T)$.
\een
\end{definition}
The global existence of  weak solutions for the fractional Navier-Stokes  equations \eqref{GNS} could be
proved by the  classical Faedo-Galerkin
argument.   For  reader's convenience, we will sketch its proof
 in Appendix \ref{appendix}.

 The
 strong energy inequality plays an important role in the proof of eventual regularity and  Hausdorff dimension estimate of possible time singular  points. As said before,
it is not clear  whether  the strong energy
inequality of \eqref{GNS} ($\alpha\neq1$)
is valid in the whole space
 since that when we apply  compactness
 theorem to the approximation solution $u^{N}(x,t)$,
 we just obtain a subsequence of $u^{N}(x,t)$ which
 strongly converge to $u$ in $L^{2}_{loc}(\mathbb{R}^{3})$
 for almost every $t\in [0,T]$ as $N$  tends to infinity.
  We refer the reader to Appendix \ref{appendix} for the details. We would like to point out that the usual localization argument for the  proof of the strong energy inequality to the 3d Navier-Stokes equations in  the whole space or the exterior  domain  breaks down
   in the case that  $\alpha\neq1$ in \eqref{GNS}
    due to the presence of the nonlocal derivative.

Now we recall the definition of time singular point  and Hausdorff dimension.

\begin{definition}[Irregular Point, \cite{[Leray]}]
A time point $\tau<\infty$ is said to be an irregular point of the
solution to  the  equations \eqref{GNS}   if $u(s)$ is a regular solution on $ (t,\tau)\times \Omega$, for some $t<\tau $, and
it is impossible
 to extend $u(s)$ to a regular solution on $(t,\tau')$ with $\tau'>\tau$.
\end{definition}

\begin{definition}[Hausdorff measure and Hausdorff dimension, \cite{[Ziemer]}]
\label{HD} For each $\gamma>0,~\varepsilon>0$, and
$E\subset\mathbb{R}^{n}$. Set
$$
\mathcal{H}^{\gamma}(E)=\liminf_{\varepsilon\rightarrow0}
\Big\{\sum^{\infty}_{i=1}\alpha(\gamma)2^{-\gamma}\mathrm{diam}\, (B_{i})^{\gamma}: E\subset\bigcup^{\infty}_{i=1}B_{i}, B_{i} \text{ is an open ball
and} ~ \mathrm{diam}\, B_{i}<\varepsilon\Big\}.
$$
$\mathcal{H}^{\gamma}(E)$ is called the Hausdorff measure of $E$, where $\alpha(\gamma)$ denotes the volume of the unit ball in $\mathbb{R}^{n}$.
For every set $E$, there is a non-negative number, $ d=d(E)$, such
that
$$
\ba
&\mathcal{H}^{\gamma}(E)=0 ~~&\text{if}~~ \gamma>d,\\
&\mathcal{H}^{\gamma}(E)=\infty~~ &\text{if}~~ \gamma<d. \ea$$ the
number $d(E)$ is called the Hausdorff dimension of $E$.
\end{definition}

Let $\mathcal{IR} $ stands for  the possible time irregular
 points of weak solutions to \eqref{GNS} and the rest is denoted by  $\mathcal{R}$.
    Our result is  concerned with the estimate for the Hausdorff dimension of the set $\mathcal{IR} $.
\begin{theorem}\label{Hausdorff}
For $5/6\leq\alpha<5/4$, the $(5-4\alpha)/2\alpha$ dimensional
Hausdorff measure
 of the possible time singular points of weak solutions of \eqref{GNS}
  on the interval $(0,\infty)$ is zero, namely,
$$  \mathcal{H}^{\f{5-4\alpha}{2\alpha}}(\mathcal{IR})=0.$$
\end{theorem}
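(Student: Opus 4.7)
The plan is to mimic the classical Leray--Scheffer strategy in the fractional setting, combining the $H^\alpha$ well-posedness and weak--strong uniqueness from Section 3 with the eventual regularity from Section 4, and closing with a one-sided Vitali covering driven by a sharp blow-up rate at irregular times.

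The heart of the argument is a Leray-type lower bound on $\|\Lambda^\alpha u(t)\|_{L^2}$ as $t\to\tau^-$ for $\tau\in\mathcal{IR}$. Under the rescaling $u_\lambda(x,t)=\lambda^{2\alpha-1}u(\lambda x,\lambda^{2\alpha}t)$ one has $\|u_\lambda\|_{\dot H^\alpha}=\lambda^{3\alpha-5/2}\|u\|_{\dot H^\alpha}$ while time scales as $\lambda^{2\alpha}$, so the subcritical local existence theorem for $\alpha>5/6$ yields a lifespan $T^{\ast}(u_0)\geq c\,\|u_0\|_{\dot H^\alpha}^{-4\alpha/(6\alpha-5)}$. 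By weak--strong uniqueness, if the weak solution is regular on $(\tau-\delta_0,\tau)$ then $T^{\ast}(u(t))=\tau-t$ for each $t$ in this interval, hence
\[
\|\Lambda^\alpha u(t)\|_{L^2}^{2}\ \geq\ C\,(\tau-t)^{-(6\alpha-5)/(2\alpha)}.
\]
Because $(6\alpha-5)/(2\alpha)<1$ precisely when $\alpha<5/4$, an integration yields the master estimate
\[
\int_{\tau-\delta}^{\tau}\|\Lambda^\alpha u(t)\|_{L^2}^{2}\,dt\ \geq\ C\,\delta^{\gamma},\qquad \gamma:=\frac{5-4\alpha}{2\alpha},\quad 0<\delta\leq\delta_0.
\]

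With this estimate in hand, I would run a standard Vitali argument. The eventual regularity places $\mathcal{IR}\subset(0,T_0)$ for some $T_0<\infty$, and the strong energy inequality together with local well-posedness in $H^\alpha$ forces $\mathcal{IR}$ to have Lebesgue measure zero (at almost every $\tau$ one has $u(\tau)\in H^\alpha$, hence $\tau$ is regular). Given $\varepsilon>0$, pick an open set $U\supset\mathcal{IR}$ with $|U|<\varepsilon$. For each $\tau\in\mathcal{IR}$, select $\delta_\tau\in(0,\varepsilon)$ so small that $(\tau-\delta_\tau,\tau)\subset U$, then extract a countable pairwise disjoint subfamily $\{I_i=(\tau_i-\delta_i,\tau_i)\}$ whose fivefold enlargements still cover $\mathcal{IR}$. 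Summing the master estimate,
\[
\sum_i \delta_i^{\gamma}\ \leq\ C\sum_i\int_{I_i}\|\Lambda^\alpha u\|_{L^2}^{2}\,dt\ \leq\ C\int_{U}\|\Lambda^\alpha u\|_{L^2}^{2}\,dt,
\]
and since $u\in L^2((0,T_0);H^\alpha)$, absolute continuity of the Lebesgue integral forces the right-hand side to zero as $\varepsilon\to 0$, giving $\mathcal H^{\gamma}(\mathcal{IR})=0$.

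The main obstacle will be pinning down the sharp subcritical local existence exponent $4\alpha/(6\alpha-5)$ in $\dot H^\alpha$ uniformly up to the blow-up time; this is precisely the content of the $H^\alpha$ energy estimates and weak--strong uniqueness assembled in Section 3, and the scaling exponent is forced by dimensional analysis. The critical endpoint $\alpha=5/6$ requires separate treatment because the lifespan is no longer polynomially controlled by $\|u_0\|_{\dot H^{5/6}}$, but there $\gamma=1$ so the assertion reduces to Lebesgue measure zero, which comes for free from the almost-every-time validity of the strong energy inequality and the critical $H^{5/6}$ local existence. The Vitali step itself is routine, the only mildly delicate point being that the natural covers of $\mathcal{IR}$ consist of half-open intervals anchored from below.
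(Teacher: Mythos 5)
Your proposal is correct and rests on the same three pillars as the paper: eventual regularity to confine $\mathcal{IR}$ to a finite interval, the Leray-type blow-up rate $\|\Lambda^{\alpha}u(t)\|_{L^{2}}\gtrsim (\tau-t)^{-(6\alpha-5)/(4\alpha)}$ at each irregular time $\tau$ (with exactly the lifespan exponent $4\alpha/(6\alpha-5)$ that Proposition \ref{local} supplies), the fact that $|\mathcal{IR}|=0$, and the global bound $u\in L^{2}((0,T^{\ast});H^{\alpha})$ from the energy inequality. The differences are in execution rather than substance. First, you obtain the blow-up rate by combining the scaling-forced lifespan lower bound with weak--strong uniqueness; the paper instead integrates the differential inequality \eqref{grawall} backwards from $\tau$ (Proposition \ref{coro}) --- the two routes give the same exponent, and the paper still needs the lifespan bound to show $\|\Lambda^{\alpha}u(t)\|_{L^{2}}\to\infty$ as $t\to\tau^{-}$. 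Second, and more noticeably, your covering step is a Vitali argument: small left-anchored intervals inside an open set $U\supset\mathcal{IR}$ of small measure, a disjoint $5r$-subfamily, and absolute continuity of $\int\|\Lambda^{\alpha}u\|_{L^{2}}^{2}\,dt$. The paper instead decomposes the regular set into its maximal open components $(\tau_{i},s_{i})$ with $s_{i}\in\mathcal{IR}$, integrates the blow-up rate over each whole component to get the global summability $\sum_{i}(s_{i}-\tau_{i})^{(5-4\alpha)/(2\alpha)}\lesssim\|u(0)\|_{L^{2}}^{2}$, and covers $\mathcal{IR}$ by the complement of finitely many components, using subadditivity of $x\mapsto x^{\gamma}$ for $\gamma\le 1$. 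Your version trades the maximal-interval decomposition for the Vitali selection (and the half-open interval technicality you flag, which is harmless if one enlarges the factor slightly); the paper's version yields the slightly stronger quantitative information that the series of $\gamma$-th powers of the gaps converges. Your treatment of the endpoint $\alpha=5/6$ (reduce $\mathcal{H}^{1}$ to Lebesgue measure) matches the paper's.
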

\begin{remark}In contrast to
 Katz and Pavlovi\'c's result that  the Hausdorrf dimension at the  first time breakdown for the smooth solution to \eqref{GNS}
  is at most $5-4\alpha$ for $1<\alpha< 5/4$ in \cite{[KP]}, our result is concerned with the Hausdorff dimension estimate of the potential time singular set for the finite energy   weak solutions to \eqref{GNS} and  is meaningful  in the case $5/6\leq\alpha<1$. It should be pointed out that the  Hausdorff dimension of the potential time-space singular points for the (suitable) weak solutions to \eqref{GNS} has not been solved except $\alpha=1$.
\end{remark}

Before we show the above theorem, actually, we first prove that
\begin{theorem}\label{eventual regularization}
For $5/6\leq\alpha<5/4$, there exists a constant $T^{\ast}>0$ such that
the weak solutions
 to \eqref{GNS} become   strong solution on $(T^{\ast},\infty)$.
\end{theorem}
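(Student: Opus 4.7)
The plan is to adapt Leray's classical eventual regularity argument to the fractional setting: use the strong energy inequality to locate a large time $T^{\ast}$ at which the weak solution becomes small in the critical (or subcritical) Sobolev space $H^{\alpha}$, and then upgrade the weak solution to a global strong one from $T^{\ast}$ onward via the small-data global well-posedness and weak-strong uniqueness developed in Section~3.

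First I would extract from the strong energy inequality \eqref{SEI} with $\tau=0$ the global bound
\begin{equation*}
2\nu\int_{0}^{\infty}\|\Lambda^{\alpha}u(s)\|_{L^{2}}^{2}\,ds\le\|u(0)\|_{L^{2}}^{2}.
\end{equation*}
Since $u(\cdot,t)$ has zero spatial mean on $\mathbb{T}^{3}$ (as noted in the introduction), a Poincar\'e-type bound gives $\|u(s)\|_{L^{2}}\le C\|\Lambda^{\alpha}u(s)\|_{L^{2}}$, so in fact $\int_{0}^{\infty}\|u(s)\|_{H^{\alpha}}^{2}\,ds<\infty$. Chebyshev's inequality then implies that for every $\eta>0$ the set $\{s>0:\|u(s)\|_{H^{\alpha}}>\eta\}$ has finite Lebesgue measure.

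Second, let $\eta_{0}>0$ denote the smallness threshold furnished by the small-data global well-posedness in $H^{\alpha}$ established in Section~3; this result is available precisely because, for $5/6\le\alpha<5/4$, the space $H^{\alpha}$ embeds into the scale-invariant $\dot{H}^{5/2-2\alpha}$, so $H^{\alpha}$ is critical at $\alpha=5/6$ and subcritical for $5/6<\alpha<5/4$. Let $S\subset(0,\infty)$ be the full-measure set of times from which \eqref{SEI} is admissible as a starting instant. Since $(0,\infty)\setminus S$ has measure zero while $\{s:\|u(s)\|_{H^{\alpha}}>\eta_{0}\}$ has finite measure, the intersection
\begin{equation*}
S\cap\bigl\{s>0:\|u(s)\|_{H^{\alpha}}\le\eta_{0}\bigr\}
\end{equation*}
has infinite measure. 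I pick any $\tau$ in this intersection and set $T^{\ast}:=\tau$.

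Third, the small-data theorem of Section~3 produces a global strong solution $v\in L^{\infty}([T^{\ast},\infty);H^{\alpha})\cap L^{2}([T^{\ast},\infty);H^{2\alpha})$ of \eqref{GNS} with $v(T^{\ast})=u(T^{\ast})$, while the restriction of $u$ to $[T^{\ast},\infty)$ is itself a Leray--Hopf weak solution with initial datum $u(T^{\ast})$ satisfying \eqref{SEI} from $T^{\ast}$ on, thanks to the choice $T^{\ast}\in S$. The weak-strong uniqueness theorem of Section~3, applicable since $v$ lies in the required class $L^{\infty}H^{\alpha}\cap L^{2}H^{2\alpha}$, then forces $u\equiv v$ on $[T^{\ast},\infty)$, so that $u$ is a strong solution on $(T^{\ast},\infty)$ as asserted.

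The main obstacle is not in this concluding combinatorial/measure-theoretic step but in the prerequisites from Section~3: the small-data global well-posedness in $H^{\alpha}$ and the weak-strong uniqueness in the class $L^{\infty}H^{\alpha}\cap L^{2}H^{2\alpha}$ for the fractionally dissipative system. Both are precisely tied to the assumption $\alpha\ge 5/6$, which is the exact threshold at which the $H^{\alpha}$-norm controls the nonlinear term compatibly with the scaling of \eqref{GNS}; outside this range neither the smallness mechanism nor the uniqueness argument used here survives, which is the structural reason why the theorem stops at $\alpha=5/6$.
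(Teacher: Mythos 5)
Your proposal is correct and follows essentially the same route as the paper: both extract from the strong energy inequality the finiteness of $\int_0^\infty\|\Lambda^\alpha u(s)\|_{L^2}^2\,ds$, use it to locate a large admissible time $T^\ast$ at which the data is small enough for the global small-data well-posedness of Section~3, and then conclude via the weak-strong uniqueness of Proposition~3.2. The only (harmless) difference is that you arrange smallness of the full $H^\alpha$ norm via Chebyshev and Poincar\'e, whereas the paper argues by contradiction that the scale-invariant quantity $\|u(t)\|_{L^2}^{(6\alpha-5)/2\alpha}\|\Lambda^\alpha u(t)\|_{L^2}^{(5-4\alpha)/2\alpha}$ must eventually drop below the threshold $C_1^{-1}\nu$, using only the monotonicity of $\|u(t)\|_{L^2}$ --- your version asks for slightly more but implies the paper's condition since the two exponents sum to $1$.
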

\begin{remark}
It is worth   pointing out
 that our results are still valid under the assumption that the weak solutions satisfy the strong energy inequality in the whole space.
\end{remark}
\begin{remark}
According to \eqref{dimensional analysis2}, we notice that the 2d fractional
 Navier-Stokes equations  \eqref{GNS} is also a supercritical system in the case  that $\alpha<1$.
 In the spirit of this paper, one could prove that the upper bound on the Hausdorff  dimension   of the potential time singular points of weak solutions   to \eqref{GNS} for $2/3\leq\alpha<1$ is  $(2-2\alpha)/\alpha$ and
  the eventual regularization of weak solutions is true when the  spatial dimension is two.
  We leave this to the interested reader.
 More generally, the upper bound on the Hausdorff  dimension   of the possible time singular points of weak solutions to the $n$ dimensional generalized  Navier-Stokes equations is $(n+2-4\alpha)/2\alpha$ in the case that  $(n+2)/6\leq\alpha<(n+2)/4$.
\end{remark}

To prove our results, we will employ some fundamental
 strategy in \cite{[Galdi],[H],[Leray],[Scheffer]}.
In Leray's pioneering work \cite{[Leray]}, he proved   the global existence and uniqueness of solution to the 3d Navier-Stokes equations
  with initial velocity $u(0)\in H^{1}$
provided $\|u(0)\|^{1/2}_{L^{2}}\|\nabla u(0)\|^{1/2}_{L^{2}}$ being sufficiently small and  the  local solvability
when the initial
data belongs   to space $H^1$
without   smallness assumption. See \cite{[Galdi],[H]} for a more modern exposition. Leray's eventual regularization of weak solutions relied on the  $H^1$ well-posedness result.
In addition, it is worth mentioning that
\be\label{leray}
\|u(0)\|_{\dot{H}^{1/2}}\leq C \|u(0)\|^{\frac{1}{2}}_{L^{2}}\|\nabla u(0)\|^{\frac{1}{2}}_{L^{2}}
\ee
is invariant under the scaling \eqref{dimensional analysis}.

A key observation  is  an analogous inequality of \eqref{leray}
$$
\|u\|_{\dot{H}^{\frac{5-4\alpha}{2}}}\leq C \| u\|^{\frac{6\alpha-5}{2\alpha}}_{L^{2}}\|\Lambda^{\alpha}u\|
^{\frac{5-4\alpha}{2\alpha}}_{L^{2}},~~5/6\leq\alpha\leq5/4.
$$
 Notice that $\dot{H}^{\frac{5-4\alpha}{2}}$ appearing in the last inequality is the
critical space to (\ref{GNS}) due to the fact that $\|u_{\lambda}\|_{\dot{H}^{\frac{5-4\alpha}{2}}(\mathbb{R}^{3})}
=\|u\|_{\dot{H}^{\frac{5-4\alpha}{2}}(\mathbb{R}^{3})}$ where $u_{\lambda}=\lambda^{2\alpha-1}u(\lambda
x)$, from which we see that the energy space $L^{2}$ is the critical space to (\ref{GNS}) when $\alpha=5/4$. Another interesting  ingredient of the latter inequality is the borderline case that $\alpha=5/6$, which suggests us that $5/6$ is  a  probable endpoint of our discussion. Indeed, we would like to point out that
 $H^{\alpha}$  is the subcritical  space  to (\ref{GNS}) when $\alpha>5/6$ in the sense that $\|u_{\lambda}\|_{\dot{H}^{\alpha}(\mathbb{R}^{3})}$ converges to $0$ as $\lambda$ tends to $0$. Thence,
we obtain a strong solution when we solves \eqref{GNS} with $\alpha\geq 5/6$ in $H^{\alpha}$.

More precisely,
we shall establish  the local existence
 for arbitrarily large data in space $H^{\alpha}$ and
the global
well-posedness of (\ref{GNS}) under the assumption that the 	
quantity
$
\|u(0)\|_{L^{2}}
^{(6\alpha-5)/2\alpha}\|\Lambda^{\alpha}u(0)\|
_{2}^{(5-4\alpha)/2\alpha}$
 is small enough. To this end, we exploit some new estimates to \eqref{GNS} compared to the previous estimates presented in \cite{[Lions2],[Wu2]}.
Next, we show that this strong solution  coincides with the weak solutions associated with the same initial data. Roughly speaking, small solution in $H^{\alpha}$,       weak-strong uniqueness,
  and the strong energy inequality  \eqref{SEI}
 mean the eventual regularization of weak solutions. Thus, we complete the proof of theorem $\ref{eventual regularization}$.

As a  critical by-product of well-posedness result in $H^{\alpha}$,
we could deduce that
the blow-up rate
$$
\|\Lambda^{\alpha}u(t)\|_{L^{2}}
\geq\f{C\nu^{\f{5-2\alpha}{4\alpha}}}{(t_{0}-t)^{\f{6\alpha-5}
{4\alpha}}},~ t<t_{0},~5/6<\alpha<5/4,
$$
 for $t_{0}$    to be a possible irregular  including interval $[0,T^{\ast})$.
This  enables us to achieve the   Hausdorff dimensional estimate
 of  possible time singular  points of  weak solutions on the interval  $(0,\infty)$.

\section{Well-posedness theory  in $H^{\alpha}$  for $\alpha\geq5/6$ and weak-strong uniqueness}

\subsection{Strong solution}
The following is about
 the  existence and uniqueness of the   solution to
 \eqref{GNS} for $\alpha\geq5/6$  with the
initial data in $ H^{\alpha}$.
Notice that
global well-posedness of  \eqref{GNS}  for small initial
 data belonging to $\dot{H}^{5/6}$ has been established in the sense that $u\in L^{\infty}(0,\infty;\dot{H}^{5/6}) \cap
L^{2}(0,\infty;\dot{H}^{5/3})$   by Wu in
 \cite[Theorem 6.1]{[Wu2]}, where we have used the fact that  homogeneous space $\dot{H}^{5/6}$ and  homogeneous Besov space $\dot{B}^{5/6}_{2,2}$ coincide.
 For
the local well-posedness  for large initial data in  $\dot{H}^{5/6}$, see also \cite[Theorem 6.2]{[Wu2]}. The solution constructed by Wu
is also a weak solution  when the initial data belongs to $H^{5/6} $. This fact is clear since the domain considered here is periodic.
 Consequently, we mainly pay our attention on the
 case when $\alpha>5/6$ blow.
\begin{proposition}[Strong solution]\label{local}
For every $u(0)\in H^{\alpha} $(
$5/6<\alpha\leq5/4 $) with divergence-free, there exists a
constant
$$T<\f{\nu^{\f{5-2\alpha}{6\alpha-5}}}
{\f{2\alpha C}{6\alpha-5}\|\Lambda^{\alpha}u(0)\|_{L^{2}}^{\f{4\alpha}{6\alpha-5}}
}$$
such that the system \eqref{GNS} admits a unique solution $u$ in
$$
 L^{\infty}((0,\,T);\,H^{\alpha}) \cap
L^{2}((0,\,T);\,H^{2\alpha}).
$$
Moreover, let $5/6<\alpha<5/4$ and assume that the initial data satisfies
\be\label{con}
 \| u(0)\|^{\frac{6\alpha-5}{2\alpha}}_{L^{2}}\|\Lambda^{\alpha}u(0)\|
^{\frac{5-4\alpha}{2\alpha}}_{L^{2}}< C_{1}^{-1}\nu,
\ee
for  some constat $C_{1}$ depending on the domain $\Omega$. Then $T$ can be chosen as arbitrary positive constants.
\end{proposition}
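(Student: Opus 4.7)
The overall strategy is a standard Faedo--Galerkin construction combined with an \emph{a priori} $H^{\alpha}$ estimate: once the appropriate differential inequality for $\|\Lambda^{\alpha}u\|_{L^{2}}^{2}$ is established, both the explicit lifespan bound and the smallness-global assertion drop out, and the passage to the Galerkin limit, uniqueness, and time continuity into $H^{\alpha}$ follow classical lines via Aubin--Lions compactness and the usual weak-strong comparison.

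For the core $H^{\alpha}$ estimate I would test \eqref{GNS} with $\Lambda^{2\alpha}u$ at the Galerkin level and use $\Div\,u=0$ to obtain
\[
\tfrac{1}{2}\tfrac{d}{dt}\|\Lambda^{\alpha}u\|_{L^{2}}^{2}+\nu\|\Lambda^{2\alpha}u\|_{L^{2}}^{2}=-\langle u\cdot\nabla u,\Lambda^{2\alpha}u\rangle.
\]
The heart of the argument is the nonlinear estimate. Applying H\"older with exponents $(p_{1},p_{2},2)$, $1/p_{1}+1/p_{2}=1/2$, choosing $p_{1}=6/(3-2\alpha)$ and $p_{2}=3/\alpha$, and invoking the periodic Sobolev embeddings $\dot H^{\alpha}\hookrightarrow L^{p_{1}}$ and $\|\nabla u\|_{L^{p_{2}}}\lesssim\|\Lambda^{5/2-\alpha}u\|_{L^{2}}$ produces
\[
|\langle u\cdot\nabla u,\Lambda^{2\alpha}u\rangle|\leq C\,\|\Lambda^{\alpha}u\|_{L^{2}}\,\|\Lambda^{5/2-\alpha}u\|_{L^{2}}\,\|\Lambda^{2\alpha}u\|_{L^{2}}.
\]
This is where $\alpha>5/6$ enters: it guarantees $5/2-\alpha\in(\alpha,2\alpha]$, so the middle factor can be interpolated between $\|\Lambda^{\alpha}u\|_{L^{2}}$ and $\|\Lambda^{2\alpha}u\|_{L^{2}}$ with exponent $\theta=(6\alpha-5)/(2\alpha)$. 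Young's inequality then absorbs the top-order piece into the dissipation and yields
\[
\tfrac{d}{dt}\|\Lambda^{\alpha}u\|_{L^{2}}^{2}+\nu\|\Lambda^{2\alpha}u\|_{L^{2}}^{2}\leq\frac{C}{\nu^{(5-2\alpha)/(6\alpha-5)}}\,\|\Lambda^{\alpha}u\|_{L^{2}}^{2\beta},\qquad\beta=\frac{8\alpha-5}{6\alpha-5}.
\]
Dropping the dissipation and integrating the ODE $y'\leq C_{\nu}y^{\beta}$ with $\beta-1=2\alpha/(6\alpha-5)$ produces exactly the explicit lifespan $T$ stated in the proposition, and the resulting uniform bounds on $u^{N}$ in $L^{\infty}(0,T;H^{\alpha})\cap L^{2}(0,T;H^{2\alpha})$ let the Galerkin limit be taken in the claimed function space.

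For the global part under the smallness condition \eqref{con}, I would instead use the \emph{critical} form of the same estimate,
\[
|\langle u\cdot\nabla u,\Lambda^{2\alpha}u\rangle|\leq C\,\|u\|_{\dot H^{(5-4\alpha)/2}}\,\|\Lambda^{2\alpha}u\|_{L^{2}}^{2},
\]
obtained from H\"older with $p_{1}=3/(2\alpha-1)$, $p_{2}=6/(5-4\alpha)$ and the Sobolev embeddings $\dot H^{(5-4\alpha)/2}\hookrightarrow L^{p_{1}}$, $\|\nabla u\|_{L^{p_{2}}}\lesssim\|\Lambda^{2\alpha}u\|_{L^{2}}$. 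The key interpolation $\|u\|_{\dot H^{(5-4\alpha)/2}}\leq C\|u\|_{L^{2}}^{(6\alpha-5)/(2\alpha)}\|\Lambda^{\alpha}u\|_{L^{2}}^{(5-4\alpha)/(2\alpha)}$ emphasized in the introduction, combined with the basic $L^{2}$ energy inequality, shows that the smallness hypothesis \eqref{con} makes the coefficient $C\|u(t)\|_{\dot H^{(5-4\alpha)/2}}$ strictly less than $\nu$ at $t=0$; a continuity/bootstrap argument then propagates this, giving $\|\Lambda^{\alpha}u(t)\|_{L^{2}}\leq\|\Lambda^{\alpha}u(0)\|_{L^{2}}$ for all $t>0$ and hence global regularity with $T$ arbitrary.

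Uniqueness in $L^{\infty}H^{\alpha}\cap L^{2}H^{2\alpha}$ follows from a Gronwall argument on the difference $w=u_{1}-u_{2}$, controlling $\langle w\cdot\nabla u_{2},w\rangle$-type terms by the same Sobolev machinery. The step I expect to require most care is tracking the precise $\nu$-dependence through the Young absorption --- the exact constants determine both the lifespan formula and the threshold $C_{1}$ in \eqref{con} --- and handling the borderline scaling: the argument above degenerates at $\alpha=5/6$ because $5/2-\alpha=\alpha$ forces $\theta=0$ and destroys the absorption, which is why that endpoint is quoted separately from Wu rather than reproved here.
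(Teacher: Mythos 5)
Your proposal follows essentially the same route as the paper: a Galerkin construction with the $\Lambda^{2\alpha}$ energy identity, the identical H\"older/Gagliardo--Nirenberg exponents $6/(3-2\alpha)$ and $3/\alpha$ for the local estimate and $3/(2\alpha-1)$, $6/(5-4\alpha)$ for the small-data global estimate, the same ODE integration $y'\leq C_{\nu}y^{(8\alpha-5)/(6\alpha-5)}$ yielding the stated lifespan, and the same continuity argument propagating \eqref{con}. The only nit is your closing remark: at $\alpha=5/6$ one has $5/2-\alpha=2\alpha$ (not $=\alpha$), which is what forces $\theta=(6\alpha-5)/(2\alpha)=0$; your conclusion about the degeneration at that endpoint is nevertheless correct.
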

\begin{remark}\label{rem}
After we finished this paper, we notice that the well-posedness theory  in $\dot{H}^{\frac{5-4\alpha}{2}}$ to equations \eqref{GNS} with $\alpha\geq5/6$ had been established in \cite{[Wu2]}. Compared with the method adopted in \cite{[Wu2]}, our method is elementary and avoids the highly-sophisticated tools from harmonic analysis. More importantly, as a by-product of our analysis,  we could deduce  a necessary condition for some time $t$ to be a potential blow up time  which plays an important role in the proof of Theorem \ref{Hausdorff}. For the detail, see Proposition \ref{coro}.
\end{remark}
\begin{proof}
The existence proof is based on the
 approximate system blow
$$\partial_{t} u^{N}+\nu \Lambda^{2\alpha}u^{N}+P_{N}(u^{N}\cdot\nabla u^{N})+P_{N}\nabla p^{N}=0, ~\text{div}\, u^{N}=0$$
together with the initial condition
$$u^{N}(x,0)=P_{N}u(x,0),$$
where $
 P_{N}f(x)=\mathcal{F}^{-1}(1_{k\leq N}(k)\hat{u}(k))$.
For its detail, see the Appendix \ref{appendix}.

\noindent
  Step 1: Local well-posedness

 Taking the inner product of
 approximate equations with  $u^{N}$ and integrating by parts, we know that
$$
\frac{1}{2}\frac{d}{dt}\|u^{N}(t)\|_{L^{2}}^{2}+\nu\|\Lambda^{\alpha }u^{N}(t)\|_{L^{2}}^{2}=0,
$$
which implies that
\be\label{str1}
\frac{1}{2} \|u^{N}(t)\|_{L^{2}}^{2}+\nu\int^{t}_{0}\|\Lambda^{\alpha }u^{N}(s)\|_{L^{2}}^{2}\,ds=\frac{1}{2} \|u^{N}(0)\|_{L^{2}}^{2}\leq\frac{1}{2} \|u(0)\|_{L^{2}}^{2}.
\ee
 Similarly, multiplying
\eqref{GNS} by $\Lambda^{2\alpha}u$, we arrive at
 \be\label{H1}
\f{1}{2}\f{d}{dt}\|\Lambda^{\alpha}u^{N}(t)\|^{2}_{L^{2}}
+\nu\|\Lambda^{2\alpha}u^{N}(t)\|^{2}_{L^{2}}= \langle
u^{N}(t)\cdot\nabla u^{N}(t),\Lambda^{2\alpha}u^{N}(t)\rangle. \ee
 By virtue of the
Gagliardo-Nirenberg inequality,
$$
\|u^{N}(t)\|_{L^{\f{6}{3-2\alpha}} }\leq
C\|\Lambda^{\alpha}u^{N}(t)\|_{L^{2}},
$$
and
$$
\|\nabla u^{N}(t)\|_{L^{\f{3}{\alpha}} }\leq
C\|\Lambda^{\alpha}u^{N}(t)\|_{L^{2}}^{\f{6\alpha-5}{2\alpha}}
\|\Lambda^{2\alpha}u^{N}(t)\|^{\f{5-4\alpha}{2\alpha}}_{L^{2}}.
$$
we obtain
$$\ba
|\langle u^{N}(t) \cdot\nabla u^{N}(t),\Lambda^{2\alpha}u^{N}(t)\rangle|
&\leq\|u^{N}(t)\|_{L^{\f{6}{3-2\alpha}}}\|\nabla u^{N}(t)\|
_{L^{\f{3}{\alpha}}(\Omega)}
\|\Lambda^{2\alpha}u^{N}(t)\|_{L^{2}}\\
&\leq
C\|\Lambda^{\alpha}u^{N}(t)\|_{L^{2}}
\|\Lambda^{\alpha}u^{N}(t)\|_{L^{2}}^{\f{6\alpha-5}{2\alpha}}
\|\Lambda^{2\alpha}u^{N}(t)\|^{\f{5-2\alpha}{2\alpha}}_{L^{2}}
\\
&\leq
\f{1}{2}\nu\|\Lambda^{2\alpha}u^{N}(t)\|_{L^{2}}^{2}
+C\nu^{\f{2\alpha-5}{6\alpha-5}}
\|\Lambda^{\alpha}u^{N}(t)\|_{L^{2}}^{\f{2(8\alpha-5)}{6\alpha-5}}
,\ea$$
 where we  used   H\"older's inequality and  Young's inequality.

Plugging this   into bound
\eqref{H1} gives
\be\label{grawall}
\f{d}{dt}\|\Lambda^{\alpha}u^{N}(t)\|^{2}_{L^{2}}
+\nu\|\Lambda^{2\alpha}u^{N}(t)\|_{L^{2}}^{2}
\leq C\nu^{\f{2\alpha-5}{6\alpha-5}}
\|\Lambda^{\alpha}u^{N}(t)\|_{L^{2}}^{\f{2(8\alpha-5)}{6\alpha-5}},
\ee
which implies that
$$\ba
\|\Lambda^{\alpha}u^{N}(t)\|_{L^{2}}&\leq
\frac{\|\Lambda^{\alpha}u^{N}(0)\|_{L^{2}}}{\Big(1-\f{2\alpha
C}{6\alpha-5}
\nu^{\f{2\alpha-5}{6\alpha-5}}
\|\Lambda^{\alpha}u^{N}(0)\|^{\f{4\alpha}{6\alpha-5}}_{L^{2}}t\Big)
^{\f{6\alpha-5}{4\alpha}}}\\
&\leq
\frac{\|\Lambda^{\alpha}u(0)\|_{L^{2}}}{\Big(1-\f{2\alpha
C}{6\alpha-5}
\nu^{\f{2\alpha-5}{6\alpha-5}}
\|\Lambda^{\alpha}u(0)\|^{\f{4\alpha}{6\alpha-5}}_{L^{2}}t\Big)
^{\f{6\alpha-5}{4\alpha}}}\\
&\leq
\|\Lambda^{\alpha}u(0)\|_{L^{2}}+
\frac{C\|\Lambda^{\alpha}u(0)\|_{L^{2}}^{2}
\nu^{\frac{2\alpha-5}{4\alpha}}
t^{\frac{6\alpha-5}{4\alpha}}
}{\Big(1-\f{2\alpha
C}{6\alpha-5}
\nu^{\f{2\alpha-5}{6\alpha-5}}
\|\Lambda^{\alpha}u(0)\|^{\f{4\alpha}{6\alpha-5}}_{L^{2}}t\Big)
^{\f{6\alpha-5}{4\alpha}}}
,
\ea$$
where we have used the fact that $\|u^{N}(0)\|_{L^{2}}\leq \|u(0)\|_{L^{2}}$ and the elementary inequality $(a-b)^{\beta}\geq a^{\beta}-b^{\beta} $ for $a>b>0$ and $0<\beta<1$.

Thus, there exists a constat $T:=T_*$ with
$$T_{\ast}=\f{\nu^{\f{5-2\alpha}{6\alpha-5}}}
{\f{2\alpha C}{6\alpha-5}
\|\Lambda^{\alpha}u(0)\|_{L^{2}}^{\f{4\alpha}{6\alpha-5}}
}.$$
such that
$$
\|\Lambda^{\alpha}u^{N}(t)\|^{2}_{L^{2}}\leq C,\ \
t\in[0,T_{\ast}),$$
 where the constant $C$ is independent of $N$.
Therefore, we deduce the uniformly bounded estimate that \be\label{estimate1}
\|\Lambda^{\alpha}u^{N}(t)\|^{2}_{L^{2}}
+\nu\int^{t}_{0}\|\Lambda^{2\alpha}u^{N}(s)\|^{2}_{L^{2}}\,ds\leq
C,~t< T^{\ast}. \ee
Furthermore, by \eqref{str1},  for any $g\in L^{2}([0,\,t];\,L^{2})$, note that
$$\ba
\int^{t}_{0}\langle\Lambda^{2\alpha}u^{N}(s), \,g (s)\rangle \,ds&\leq \int^{t}_{0}\|\Lambda^{\alpha}u^{N}(s)\|_{L^{2}}\|\Lambda^{\alpha}g(s) \|_{L^{2}}\,ds\\
&\leq \Big(\int^{t}_{0}\|\Lambda^{\alpha}u^{N}(s)\|_{L^{2}}^{2}\,ds\Big)^{1/2}
\Big(\int^{t}_{0}\|\Lambda^{\alpha}g(s)\|_{L^{2}}^{2}\,ds\Big)^{1/2}\\
&\leq  C\|u(0)\|_{L^{2}} \| g(s)\|_{L^{2}([0,\,t];\,L^{2})},
\ea$$
which yields that $\Lambda^{2\alpha}u^{N}\in  L^{2}([0,\,t];\,L^{2})$.

By the   Gagliardo-Nirenberg inequality, we have
$$
\|\nabla u^{N}\|_{L^{\f{3}{\alpha}}}\leq
C\|\Lambda^{\alpha}  u^{N}\|_{L^{2}}^{\f{6\alpha-5}{2\alpha}}
\|\Lambda^{2\alpha}u^{N}\|^{\f{5-4\alpha}{2\alpha}}_{L^{2}},
$$
and
$$
\|u^{N} \|_{L^{\frac{6}{3-2\alpha}}}\leq C\|\Lambda^{\alpha}u^{N} \|_{L^{2}},
$$
and \eqref{str1}, we infer that
\be\label{ab1}\ba
\int^{t}_{0}\langle P_{N}(u^{N}\cdot\nabla u^{N}),g \rangle \,ds&\leq
\int^{t}_{0}\|u^{N}\|_{L^{\frac{6}{3-2\alpha}}}\|\nabla u^{N}\|_{L^{\frac{3}{\alpha}}}
\|g\|_{L^{2}}\,ds\\
 &\leq \int^{t}_{0}\|\Lambda^{\alpha}u^{N}\|_{L^{2}}
 \|\Lambda^{\alpha}u^{N}\|_{L^{2}}^{\frac{6\alpha-5}{2\alpha}}
\|\Lambda^{2\alpha}u^{N}\|_{L^{2}}^{\frac{5-4\alpha}{2\alpha}}
\|g\|_{L^{2}}\,ds \\
 &\leq    C \Big(\int^{t}_{0}\|\Lambda^{\alpha}u^{N}\|^{2}_{L^{2}}
 \|\Lambda^{\alpha}u^{N}\|_{L^{2}}^{\frac{6\alpha-5}{\alpha}}
\|\Lambda^{2\alpha}u^{N}\|_{L^{2}}^{\frac{5-4\alpha}{\alpha}}
 ds \Big)^{\frac{1}{2}}\Big(\int^{t}_{0}\| g\|^{2}_{L^{2}}\, ds \Big)^{\frac{1}{2}}\\
 &\leq    C \|u(0)\|_{L^{2}}\Big(\int^{t}_{0}\|\Lambda^{2\alpha}u^{N}\|_{L^{2}}^{\frac{5-4\alpha}{\alpha}}
 \,ds \Big)^{\frac{1}{2}}
 \| g\|_{L^{2}([0,\,t];\,L^{2})}
 \\
 &\leq  C(T_{\ast})\|u(0)\|_{L^{2}}^{2}\| g\|_{L^{2}([0,\,t];\,L^{2})},
\ea\ee
where we have used \eqref{str1} and \eqref{estimate1}.
This in turn implies that
$P_{N}(u^{N}\cdot\nabla u^{N})\in L^{2}([0,t]; L^{2})$ since $5/6\leq\alpha\leq 5/4$.

Using the pressure equation
$$
\Delta p^{N}=-\sum_{i,j}\partial_{i}\partial_{j}(u_{i}^{N}u_{j}^{N}).
$$
 and
 the boundedness of Riesz transforms on $L^q$
for any $1 < q < \infty$, we  have
$\|P_{N}\nabla p^{N}\|_{ L^{2}}\leq \|u^{N}\cdot\nabla u^{N}\|_{ L^{2}}$, from which,  exactly as in the derivation of \eqref{ab1}, yields that
$P_{N}\nabla p^{N}\in L^{2}([0,\,t]; \,L^{2})$.

Collecting the uniform estimate and
recalling that $\partial_{t}u^{N}=-\nu\Lambda^{2\alpha}u^{N}-P_{N}(u\cdot\nabla u^{N})-P_{N}\nabla p_{N}  $,
we obtain the desired estimate $\partial_{t}u^{N}\in L^{2}([0,\,t];\,L^{2})$.  This together with \eqref{estimate1},
by means of  Aubin-Lions Lemma (\cite[Theorem 2.1, p.184]{[Teman2]}),  we could claim that
$u_{N}$ strongly converge to $u$ as $ N$ tends to infinity in $L^{2}((0,\,T);\, H^{\alpha-\epsilon})$ for any $\epsilon>0$. By Fatou's Lemma, we get that $u\in L^{\infty}((0,\,T);\,H^{\alpha}) \cap
L^{2}((0,\,T);\,H^{2\alpha})$. We finish the proof of local existence. The
  uniqueness is a consequence of  the weak-strong uniqueness Proposition \ref{the1.1} whose   proof  is postponed to the next subsection \ref{ws}.

\noindent
Step 2: Global well-posedness for small initial data.

It suffices to show the uniform boundedness of  $\|\Lambda^{\alpha}u^{N}(t)\|^{2}_{L^{2}} $  where $t\in[0,T']$ for any given constant $T'$. Indeed, having this uniform estimate in hands, by means of \eqref{grawall}, one immediately obtains the uniform estimate of $\int^{t}_{0}\|\Lambda^{2\alpha}u^{N}(s)\|_{L^{2}}^{2}\,ds$ for $t<T'$.
The rest part of passing to the limit of the approximations solution is analogous to the one of local well-posedness.

To this end, using once again
the Gagliardo-Nirenberg inequality,   we obtain
$$
\|u^{N}\|_{L^{\f{3}{2\alpha-1}}}\leq
C\|u^{N}\|^{\f{6\alpha-5}{2\alpha}}_{L^{2}}
\|\Lambda^{\alpha}u^{N}\|^{\f{5-4\alpha}{2\alpha}}_{L^{2}}
,~\text{and}~
\|\nabla u^{N}\|_{L^{\f{6}{5-4\alpha}}}\leq
C\|\Lambda^{2\alpha}u^{N}\|_{L^{2}},$$ which yields
$$\ba
|\langle u^{N} \cdot\nabla u^{N},\,\Lambda^{2\alpha}u^{N}\rangle| &\leq
\|u^{N}\|_{L^{\f{3}{2\alpha-1}}} \|\nabla
u^{N}\|_{L^{\f{6}{5-4\alpha}}}
\|\Lambda^{2\alpha}u^{N}\|_{L^{2}}\\
&\leq C_{1}\|u^{N}\|^{\f{6\alpha-5}{2\alpha}}_{L^{2}}
\|\Lambda^{\alpha}u^{N}\|^{\f{5-4\alpha}{2\alpha}}_{L^{2}}
\|\Lambda^{2\alpha}u^{N}\|^{2}_{L^{2}}. \ea$$
Substituting the latter inequality  into \eqref{H1}, we get  \be\label{estimate2}
\f{1}{2}\f{d}{dt}\|\Lambda^{\alpha}u^{N}(t)\|^{2}_{L^{2}}
+\Big(\nu-C_{1}\|u^{N}(t)\|^{\f{6\alpha-5}{2\alpha}}_{L^{2}}
\|\Lambda^{\alpha}u^{N}(t)\|
_{_{L^{2}}}^{\f{5-4\alpha}{2\alpha}}\Big)
\|\Lambda^{2\alpha}u^{N}(t)\|^{2}_{L^{2}}\leq0. \ee
Assume for a while we have proved that
\be\label{assu}
 \| u^{N}(t)\|^{\frac{6\alpha-5}{2\alpha}}_{L^{2}}
 \|\Lambda^{\alpha}u^{N}(t)\|
^{\frac{5-4\alpha}{2\alpha}}_{L^{2}}< C_{1}^{-1}\nu, ~~\text{for each}~t\in[0,T').
\ee
Therefore, it follows form  \eqref{estimate2} that
$\f{d}{dt}\|\Lambda^{\alpha}u^{N}(t)\|^{2}_{L^{2}}\leq0$ ($t\in[0,T')$), which yields the desired estimate.

Now we need to prove the equality \eqref{assu} we have assumed.
By the hypothesis on initial condition \eqref{con}, we could suppose that $T_{1}<T'$ is the first time such that
\be\label{assume}
\| u^{N}(T_{1})\|^{\frac{6\alpha-5}{2\alpha}}_{L^{2}}
\|\Lambda^{\alpha}u^{N}(T_{1})\|
^{\frac{5-4\alpha}{2\alpha}}_{L^{2}}= C_{1}^{-1}\nu.
\ee
Consequently, for any
$  s\in [0,T_{1}],$ we have $\| u^{N}(s)\|^{\frac{6\alpha-5}{2\alpha}}_{L^{2}}\|\Lambda^{\alpha}u^{N}(s)\|
^{\frac{5-4\alpha}{2\alpha}}_{L^{2}}\leq C_{1}^{-1}\nu.$\\
Thanks to \eqref{str1} and \eqref{estimate2}, we obtain
$$\|u^{N}(T_{1})\|_{L^{2}}\leq\|u^{N}(0)\|_{L^{2}}\leq \|u (0)\|_{L^{2}},$$
and
$$\|\Lambda^{\alpha}u^{N}(T_{1})\|_{L^{2}}\leq \|\Lambda^{\alpha}u^{N}(0)\|_{L^{2}}\leq\|\Lambda^{\alpha}u(0)\|_{L^{2}},$$
 since
$\frac{d}{ds}\|\Lambda^{\alpha}u(T_{1})\|_{L^{2}}^{2}\leq0$.

However, through a simple calculation, we see that
$$
\nu- C_{1}\| u^{N}(T_{1})\|^{\frac{6\alpha-5}{2\alpha}}_{L^{2}}
\|\Lambda^{\alpha}u^{N}(T_{1})\|
^{\frac{5-4\alpha}{2\alpha}}_{L^{2}}\geq \nu-C_{1} \| u^{N}(0)\|^{\frac{6\alpha-5}{2\alpha}}_{L^{2}}
\|\Lambda^{\alpha}u^{N}(0)\|
^{\frac{5-4\alpha}{2\alpha}}_{L^{2}}>0.
$$
which contradicts \eqref{assume}.
Thus, the claim \eqref{assu} is proved.
\end{proof}

\begin{remark}\label{meaning}
A straightforward consequence of
 the above proof is that we provide an alternative approach to show Lions's global  solvability to the case $\alpha=5/4$.
Indeed,  it follows from the equation \eqref{grawall} and the
Gronwall's inequality that
$$
\|\Lambda^{5/4}u^{N}(t)\|_{L^{2}}^{2}\leq \|\Lambda^{5/4}u^{N}(0)\|_{L^{2}}^{2}
e^{C\int_{0}^{t}\|\Lambda^{5/4}u^{N}(s)\|_{L^{2}}^{2}\,ds}
\leq \|\Lambda^{5/4}u(0)\|_{L^{2}}^{2}
e^{C\|u(0)\|_{L^{2}}^{2}}.
$$	
Thus, there exists a
global strong solution to \eqref{GNS}   without without   any smallness restriction on the initial data belonging to space $H^{5/4}$ when $\alpha=5/4$. (It is well known that  the   finite weak solution $u\in L ^{\infty}((0,\,T);\, L^2)\cap L ^{2}((0,\,T);\, H^{5/4})$ is a regular solution to the \eqref{GNS} for $\alpha=5/4$. Here, we show that the weak solution has higher regularity when the initial data is more regular in the case $\alpha=5/4$.)
\end{remark}

Next we shall prove that  the strong solution $u\in L^{\infty}((0,\,T);\,H^{\alpha})\cap L^{2}((0,\,T);\,H^{2\alpha}) $ constructed above and the case $\alpha=5/6$ in \cite{[Wu2]}
coincides with
  the weak solution $v\in L^{\infty}((0,\,T);L^{2})\cap L^{2}((0,\,T);\,H^{\alpha}) $ associated  with the same initial data for $5/6\leq\alpha<5/4$. Here, we basically follow the pathway of \cite{[CMZ],[Dubois],[Galdi],[M],[Serrin2]} to obtain the weak-strong uniqueness in the class $u\in L^{\infty}((0,\,T);\,H^{\alpha})\cap L^{2}((0,\,T);\,H^{2\alpha}) $.

\subsection{Weak-strong uniqueness}\label{ws}
\begin{proposition}\label{the1.1}
Let $5/6\leq\alpha\leq5/4$.  From  Proposition \ref{local} in the last subsection and Theorem 6.2 in \cite{[Wu2]}, there exist  a  solution  $u\in L^{\infty}((0,\,T);\,H^{\alpha})\cap L^{2}((0,\,T);\,H^{2\alpha})$  to \eqref{GNS} for
a constant $T>0$ associated with  data $u(0)$ in $H^{\alpha}$. Let $v$ be any weak solution to \eqref{GNS} with initial condition $u(0)$.
 Then $u=v$ for a.e. $x\in \Omega, t>0$.
\end{proposition}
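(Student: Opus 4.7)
The plan is to follow the classical Leray--Serrin weak-strong uniqueness argument, adapted to the fractional dissipation. Set $w=u-v$; the goal is to derive a Gr\"onwall-type inequality for $\|w(t)\|_{L^{2}}^{2}$ and conclude $w\equiv 0$ on $[0,T]$.

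\textbf{Step 1: Identity for $\langle u(t),v(t)\rangle$.} From the equation satisfied by $u$ and the bounds established in the proof of Proposition \ref{local} (namely $\Lambda^{2\alpha}u\in L^{2}((0,T);L^{2})$ and $u\cdot\nabla u\in L^{2}((0,T);L^{2})$), one has $\partial_{t}u\in L^{2}((0,T);L^{2})$. After a routine time-mollification $u_{\varepsilon}$ of $u$, $u_{\varepsilon}$ is an admissible test function in Definition \ref{1.1} for $v$; passing $\varepsilon\to 0$ and combining with the equation for $u$ (which holds pointwise) gives
\[
\langle u(t),v(t)\rangle=\|u(0)\|_{L^{2}}^{2}-2\nu\int_{0}^{t}\langle\Lambda^{\alpha}u,\Lambda^{\alpha}v\rangle\,ds-\int_{0}^{t}\langle u\cdot\nabla u,v\rangle\,ds-\int_{0}^{t}\langle v\cdot\nabla v,u\rangle\,ds.
\]

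\textbf{Step 2: Energy identity for $w$.} Combine the above with the energy equality for $u$ (available because $u$ is strong) and the strong energy inequality \eqref{SEI} for $v$ with $\tau=0$, using $\|u(0)\|_{L^{2}}=\|v(0)\|_{L^{2}}$. Expanding $\|w(t)\|_{L^{2}}^{2}=\|u(t)\|_{L^{2}}^{2}+\|v(t)\|_{L^{2}}^{2}-2\langle u(t),v(t)\rangle$, the quadratic $\Lambda^{\alpha}$ terms assemble into $\|\Lambda^{\alpha}w\|_{L^{2}}^{2}$. The two nonlinear contributions simplify using $\Div\,u=\Div\,v=0$: since $\langle v\cdot\nabla v,u\rangle=-\langle v\cdot\nabla u,v\rangle$ by integration by parts, and $\langle w\cdot\nabla u,u\rangle=\tfrac{1}{2}\langle w,\nabla|u|^{2}\rangle=0$, one finds $\langle u\cdot\nabla u,v\rangle+\langle v\cdot\nabla v,u\rangle=-\langle w\cdot\nabla u,w\rangle$. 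Consequently,
\[
\|w(t)\|_{L^{2}}^{2}+2\nu\int_{0}^{t}\|\Lambda^{\alpha}w\|_{L^{2}}^{2}\,ds\leq 2\int_{0}^{t}|\langle w\cdot\nabla u,w\rangle|\,ds.
\]

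\textbf{Step 3: Nonlinear estimate and Gr\"onwall.} H\"older's inequality with exponents $(2,3/\alpha,6/(3-2\alpha))$ combined with the Gagliardo--Nirenberg inequalities already used in the proof of Proposition \ref{local} yield
\[
|\langle w\cdot\nabla u,w\rangle|\leq\|w\|_{L^{2}}\|\nabla u\|_{L^{3/\alpha}}\|w\|_{L^{6/(3-2\alpha)}}\leq C\|w\|_{L^{2}}\|\Lambda^{\alpha}u\|_{L^{2}}^{\frac{6\alpha-5}{2\alpha}}\|\Lambda^{2\alpha}u\|_{L^{2}}^{\frac{5-4\alpha}{2\alpha}}\|\Lambda^{\alpha}w\|_{L^{2}}.
\]
Young's inequality absorbs $\nu\|\Lambda^{\alpha}w\|_{L^{2}}^{2}$ into the left-hand side, leaving
\[
\|w(t)\|_{L^{2}}^{2}\leq\frac{C}{\nu}\int_{0}^{t}\|w(s)\|_{L^{2}}^{2}\,g(s)\,ds,\qquad g(s):=\|\Lambda^{\alpha}u(s)\|_{L^{2}}^{\frac{6\alpha-5}{\alpha}}\|\Lambda^{2\alpha}u(s)\|_{L^{2}}^{\frac{5-4\alpha}{\alpha}}.
\]
The hypothesis $\alpha\geq 5/6$ is used precisely here: it ensures $(5-4\alpha)/\alpha\leq 2$, so a further H\"older in time gives $g\in L^{1}(0,T)$, with a bound in terms of $\|\Lambda^{\alpha}u\|_{L^{\infty}_{t}L^{2}}$ and $\|\Lambda^{2\alpha}u\|_{L^{2}_{t}L^{2}}$. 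At the borderline $\alpha=5/6$ one has $g(s)=\|\Lambda^{5/3}u(s)\|_{L^{2}}^{2}$, exactly integrable. Gr\"onwall together with $w(0)=0$ then forces $w\equiv 0$.

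\textbf{Main obstacle.} The only subtle point is the rigorous justification of Step 1, because Definition \ref{1.1} admits only $C_{0}^{\infty}$ test functions while $u$ is merely $L^{\infty}(H^{\alpha})\cap L^{2}(H^{2\alpha})$. This is handled by a standard time-convolution regularization of $u$, using that $\partial_{t}u\in L^{2}((0,T);L^{2})$ (which in turn relies on the bounds for $\Lambda^{2\alpha}u$, $u\cdot\nabla u$ and $\nabla p$ established along the proof of Proposition \ref{local}) to pass to the limit in every term. A minor point is that the strong energy inequality for $v$ must be used at $\tau=0$, which is explicitly permitted in Definition \ref{1.1}.
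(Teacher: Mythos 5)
Your proposal is correct and follows the same overall strategy as the paper: derive the cross-term identity for $\langle u(t),v(t)\rangle$, combine it with the energy (in)equalities to reduce everything to $\int_0^t\langle w\cdot\nabla u,w\rangle\,ds$, and close with Gagliardo--Nirenberg, Young and Gr\"onwall, with $\alpha\ge 5/6$ entering exactly where you say it does. There are two points where your execution differs from the paper's. First, in establishing the cross-term identity the paper does not pair the pointwise equation for $u$ against $v$; instead it symmetrically tests the extended weak formulation (its Lemma \ref{lemL2}) of each solution against a double regularization $u_\rho^k$, $v_\rho^k$ of the other (mollification in time via an even kernel $J_\rho$ plus spatial approximation by $C^\infty_0$ functions), exploiting the evenness of $J$ to cancel the time-derivative terms and tracking the factor $\tfrac12$ that the one-sided mollification produces at the endpoints. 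Your shortcut --- using $\partial_t u\in L^2((0,T);L^2)$ so that the equation for $u$ can be paired directly with $v$ --- is legitimate (that bound is indeed available from the proof of Proposition \ref{local}) and removes roughly half of that machinery; the price is that you still need the extension of Definition \ref{1.1} to non-compactly-supported-in-time test functions with endpoint terms, which is the content of Lemma \ref{lemL2} and which you correctly flag as the main technical point. Second, your estimate of the trilinear term uses the splitting $\|w\|_{L^2}\|\nabla u\|_{L^{3/\alpha}}\|w\|_{L^{6/(3-2\alpha)}}$, yielding the Gr\"onwall weight $\|\Lambda^{\alpha}u\|_{L^2}^{(6\alpha-5)/\alpha}\|\Lambda^{2\alpha}u\|_{L^2}^{(5-4\alpha)/\alpha}$, whereas the paper uses $\|w^2\|_{L^{6/(4\alpha+1)}}\|\nabla u\|_{L^{6/(5-4\alpha)}}$ and obtains the weight $\|\Lambda^{2\alpha}u\|_{L^2}^{4\alpha/(8\alpha-5)}$; both weights lie in $L^1(0,T)$ precisely when $\alpha\ge 5/6$, so the two choices are interchangeable.
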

In order to show Proposition \ref{the1.1}, we need
 the following  lemmas.
\begin{lemma}\label{lemL2}
Let $v$ be a weak solution to \eqref{GNS} in $(0,\,T)\times
\Omega$. Then $v$ can be redefined  on a set of zero
Lebesgue measure in such a way that $v(t)\in L^{2} $ for all
$t\in(0,\,T)$ and satisfies the identity \be\label{I}
\int^{t}_{s}\langle
v,\varphi_{\tau}\rangle-\nu\langle\Lambda^{\alpha}v,\,\Lambda^{\alpha}\varphi\rangle
-\langle v\cdot\nabla v,\varphi\rangle  d\tau=\langle
v(t),\varphi(t)\rangle  -\langle v(s),\varphi(s)\rangle,\ee for all
$s\in [0,t],\, t<T$ and all $\varphi\in
C^{\infty}((0,\,T)\times C^{\infty}_{0})$.
\end{lemma}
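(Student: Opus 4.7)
The plan is to follow the classical strategy for Navier--Stokes type weak solutions: first establish that the distributional time derivative $v_t$ belongs to a suitable negative-regularity space, next invoke a standard weak-continuity lemma to conclude that $v$ coincides a.e.\ with a function in $C_w([0,T];L^2)$, and finally derive \eqref{I} by testing the equation against smooth cut-offs in time approximating the indicator of $[s,t]$.

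From Definition~\ref{1.1}(ii), when paired with divergence-free fields the pressure drops out and $v$ satisfies $v_t = -\nu \Lambda^{2\alpha} v - \mathbb{P}\,\Div(v \otimes v)$ (with $\mathbb{P}$ the Leray projector) in the sense of distributions. Since $v \in L^2((0,T);H^\alpha)$, the dissipative term belongs to $L^2((0,T);H^{-\alpha})$. For the nonlinearity, the Sobolev embedding $H^\alpha \hookrightarrow L^{6/(3-2\alpha)}$, which is available for $5/6 \le \alpha \le 5/4$, combined with $v \in L^\infty((0,T);L^2)$, places $v\otimes v$ in a mixed-norm Lebesgue space; by duality one obtains $\mathbb{P}\,\Div(v\otimes v) \in L^q((0,T);H^{-\beta})$ for suitable $q \ge 1$ and $\beta \ge \alpha$. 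This step --- producing a usable bound on $v_t$ --- is the main technical obstacle and is precisely where the hypothesis $\alpha \ge 5/6$ enters via the required Sobolev embedding for the nonlinearity.

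Combining $v \in L^\infty((0,T);L^2)$ with the above bound on $v_t$, a classical functional-analytic lemma (see e.g.\ \cite[Chap.~III, Lemma 1.4]{[Teman2]}) yields that $v$ is equal, outside a null subset of $(0,T)$, to a function in $C_w([0,T];L^2)$. After this redefinition, $v(t) \in L^2$ is well-defined for every $t \in [0,T]$ and the map $t \mapsto \langle v(t), \psi\rangle$ is continuous for each $\psi \in L^2$, in particular justifying the pointwise values of the terms $\langle v(t),\varphi(t)\rangle$ and $\langle v(s),\varphi(s)\rangle$ appearing on the right-hand side of \eqref{I}.

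To conclude, fix $s < t$ in $[0,T)$ and let $\chi_\epsilon$ be a sequence of smooth cut-offs on $(0,T)$ with $\chi_\epsilon \to \mathbf{1}_{[s,t]}$ pointwise and $\chi_\epsilon'$ concentrating symmetrically near $s$ and $t$ with total masses $+1$ and $-1$ respectively. Plugging the admissible divergence-free test function $\chi_\epsilon(\tau)\,\varphi(x,\tau)$ into the weak formulation of Definition~\ref{1.1}(ii) and splitting $\partial_\tau(\chi_\epsilon\varphi) = \chi_\epsilon\varphi_\tau + \chi_\epsilon'\varphi$, the first contribution converges to $\int_s^t \langle v,\varphi_\tau\rangle\, d\tau$ by dominated convergence, while the boundary-concentrated piece converges to $\langle v(t),\varphi(t)\rangle - \langle v(s),\varphi(s)\rangle$ by the weak $L^2$-continuity of the previous paragraph. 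The dissipative and nonlinear integrals converge to their natural limits on $(s,t)$ by the integrability obtained in the first step, which gives \eqref{I}.
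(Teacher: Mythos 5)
Your proposal is correct and follows exactly the classical route (bound on $v_t$ in a negative Sobolev space, the weak-continuity lemma giving $v\in C_w([0,T];L^2)$ after redefinition, then testing with temporal cut-offs approximating $\mathbf{1}_{[s,t]}$), which is precisely what the paper invokes: it omits the proof, stating it is ``similar to the classical Navier--Stokes equations'' and citing \cite{[Galdi],[M],[Serrin2]}. One small inaccuracy worth flagging: the embedding $H^{\alpha}\hookrightarrow L^{6/(3-2\alpha)}$ used to control the nonlinearity holds for all $0<\alpha<3/2$, so the restriction $\alpha\ge 5/6$ is not actually needed for this lemma (it enters later, in the well-posedness and weak--strong uniqueness arguments), but this does not affect the validity of your proof.
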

The proof of  this lemma is similar to the classical
Navier-Stokes equations (\cite{[Galdi],[M],[Serrin2]}) and we omit
the details here. The following  is about an approximation lemma.
\begin{lemma}[\cite{[M]}, Lemma 2.1]\label{lem2.1}
Suppose that $X$ is a Banach space, $w\in L^{q}((0,\,T);X),1\leq
q<\infty, w_{\rho}(s)= \int^{t}_{0}J_{\rho}(s-\tau)w(\tau)d\tau$,
where $J_{\rho}(t)=\rho^{-1}J(t/\rho)$ and
 $J$ is an even nonnegative smooth function with
 $\int^{\infty}_{-\infty}J(s)\,ds=1$ and $J\in C^{\infty}_{0}(-1,\,1)$. Then $w_{\rho}\in C^{1}([0,\,t];\,X)$ and
\[
\lim_{\rho\rightarrow 0}\|w_{\rho}-w\|_{L^{q}((0,\,T);\,X)}=0.
\]
\end{lemma}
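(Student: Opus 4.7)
The plan is to split the statement into the two independent assertions: $C^1$-regularity of $w_\rho$ on $[0,T]$, and $L^q$-convergence $w_\rho \to w$ as $\rho \to 0$. For the smoothness, since $J_\rho \in C_0^\infty((-\rho,\rho))$ is smooth with compact support, and $w \in L^q((0,T);X) \subset L^1((0,T);X)$ (the interval being bounded and $q \geq 1$, by H\"older), differentiation under the Bochner integral is legitimate by dominated convergence, giving
\[
w_\rho'(s) = \int_0^T J_\rho'(s-\tau)\, w(\tau)\,d\tau.
\]
Continuity of $w_\rho'$ on $[0,T]$ then follows from the uniform continuity of $J_\rho'$ on $\mathbb{R}$ together with H\"older's inequality: for $s, s' \in [0,T]$,
\[
\|w_\rho'(s) - w_\rho'(s')\|_X \le \sup_{\tau \in \mathbb{R}}\,|J_\rho'(s-\tau) - J_\rho'(s'-\tau)|\cdot T^{1-1/q}\|w\|_{L^q((0,T);X)} \to 0
\]
as $s' \to s$.

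For the convergence in $L^q$, I would extend $w$ by zero to a function $\widetilde w$ on $\mathbb{R}$, so that $w_\rho = J_\rho \ast \widetilde w$. Young's convolution inequality, extended to Bochner-valued integrands via Minkowski's integral inequality, furnishes the uniform bound
\[
\|w_\rho\|_{L^q(\mathbb{R};X)} \le \|J_\rho\|_{L^1(\mathbb{R})}\|\widetilde w\|_{L^q(\mathbb{R};X)} = \|w\|_{L^q((0,T);X)}.
\]
For a Bochner simple function $\sigma(\tau) = \sum_{i=1}^N \chi_{E_i}(\tau)\, x_i$ with $E_i \subset (0,T)$ measurable and $x_i \in X$, componentwise scalar mollification theory gives $(\chi_{E_i})_\rho \to \chi_{E_i}$ in $L^q(\mathbb{R})$, so $\sigma_\rho \to \sigma$ in $L^q(\mathbb{R};X)$. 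Since $q < \infty$, simple functions are dense in $L^q((0,T);X)$, so a standard three-$\varepsilon$ argument combining this density with the uniform bound above promotes convergence from simple functions to all of $L^q$: given $\varepsilon>0$, choose $\sigma$ with $\|w-\sigma\|_{L^q((0,T);X)}<\varepsilon$, then $\rho$ small with $\|\sigma_\rho-\sigma\|_{L^q(\mathbb{R};X)}<\varepsilon$, and conclude $\|w_\rho-\widetilde w\|_{L^q(\mathbb{R};X)}<3\varepsilon$. Restricting to $(0,T)$ gives the claim.

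The main potential obstacle is the passage from the scalar mollification theorem to its Bochner-valued analogue, since $X$ is only assumed to be a Banach space (no separability, no reflexivity). This is resolved cleanly by the density of simple functions in $L^q((0,T);X)$, which requires $q < \infty$ (matching the hypothesis), together with Minkowski's integral inequality, which supplies the $L^q$-boundedness of convolution with $L^1$ kernels in the vector-valued setting without any structural assumption on $X$. Everything else is a routine verification that adapts the classical real-variable mollifier-approximation theorem.
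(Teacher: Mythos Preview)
The paper does not supply its own proof of this lemma; it is quoted directly from Masuda \cite{[M]} and used as a black box in the weak--strong uniqueness argument. Your proof is correct and is the standard route: differentiation under the Bochner integral (justified by dominated convergence since $J_\rho'$ is bounded with compact support and $w\in L^1((0,T);X)$) gives the $C^1$ regularity, while the $L^q$ convergence follows from the uniform bound via Minkowski's integral inequality combined with density of simple functions and a three-$\varepsilon$ argument. Nothing is missing.
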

The main proof of Proposition \ref{the1.1}  relies on the  following key lemma.
\begin{lemma}\label{prop2.5}
Suppose that $u$ and $v$  are  two weak solutions of (\ref{GNS}) in
Proposition \ref{the1.1} and let $w=u-v$.
Then for any $T>0$,  there holds \be\label{2.1}
 \langle  u(t), v(t)\rangle  +2\nu\int^{t}_{0}\langle  \Lambda^{\alpha}u(s),\,\Lambda^{\alpha}v(s)\rangle \, ds
=\langle u(0),\,v(0)\rangle-\int^{t}_{0}\langle  w(s)\cdot\nabla
u(s),w(s)\rangle \, ds.\ee
\end{lemma}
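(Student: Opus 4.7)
The stated equation is a bilinear ``cross-energy'' identity between the strong solution $u$ and the weak solution $v$. I would obtain it by pairing each function against the other's equation and combining. Since $v$ is only weak, I first mollify $u$ in time so that it serves as an admissible test function in Lemma \ref{lemL2}, whereas $u$ being strong allows me to pair its equation with $v$ directly.

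Concretely, extending $u$ by zero outside $[0,T]$ and setting $u^\rho(\tau)=\int J_\rho(\tau-s)u(s)\,ds$ as in Lemma \ref{lem2.1}, $u^\rho$ is divergence-free (mollification commutes with $\Div$, $\nabla$, $\Lambda^\alpha$), belongs to $C^1([0,T];H^{2\alpha})$, and converges to $u$ in $L^\infty((0,T);H^\alpha)\cap L^2((0,T);H^{2\alpha})$, with $\partial_\tau u^\rho\to\partial_\tau u$ in $L^2((0,T);L^2)$. Applying Lemma \ref{lemL2} to $v$ with test function $u^\rho$ yields
\[
\int_0^t\!\bigl[\langle v,u^\rho_\tau\rangle-\nu\langle\Lambda^\alpha v,\Lambda^\alpha u^\rho\rangle-\langle v\cdot\nabla v,u^\rho\rangle\bigr]d\tau=\langle v(t),u^\rho(t)\rangle-\langle v(0),u^\rho(0)\rangle.
\]
In parallel, because $u_t\in L^2((0,T);L^2)$ the strong equation $u_t+\nu\Lambda^{2\alpha}u+u\cdot\nabla u+\nabla p=0$ holds a.e.\ in $L^2$; pairing with the divergence-free $v(\tau)$ (so the pressure drops) and integrating over $(0,t)$ gives
\[
\int_0^t\!\bigl[\langle u_\tau,v\rangle+\nu\langle\Lambda^\alpha u,\Lambda^\alpha v\rangle+\langle u\cdot\nabla u,v\rangle\bigr]d\tau=0.
\]
Sending $\rho\to 0$ in the first display (using Lemma \ref{lem2.1} for the linear terms and a Sobolev--H\"older estimate for the nonlinear one) and subtracting it from the second yields
\[
\langle u(t),v(t)\rangle-\langle u(0),v(0)\rangle+2\nu\!\int_0^t\!\langle\Lambda^\alpha u,\Lambda^\alpha v\rangle\,d\tau+\int_0^t\bigl[\langle u\cdot\nabla u,v\rangle+\langle v\cdot\nabla v,u\rangle\bigr]d\tau=0.
\]

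To collapse the nonlinear sum, I would integrate by parts using $\Div v=0$ to get $\langle v\cdot\nabla v,u\rangle=-\langle v\cdot\nabla u,v\rangle$, so the bracket becomes $\langle w\cdot\nabla u,v\rangle$ with $w=u-v$. Since $\Div w=0$ implies $\langle w\cdot\nabla u,u\rangle=0$, decomposing $v=u-w$ reduces this to $-\langle w\cdot\nabla u,w\rangle$, giving (up to the sign convention of the statement) the claimed right-hand side.

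The main obstacle is justifying the $\rho\to 0$ limit of the nonlinear term $\int_0^t\langle v\cdot\nabla v,u^\rho\rangle\,d\tau$. Because $v$ is only a weak solution, $v\otimes v$ has limited integrability and derivatives must be distributed carefully between the factors. Using the embedding $H^\alpha\hookrightarrow L^{6/(3-2\alpha)}$ in space together with the Gagliardo--Nirenberg bound on $\nabla u^\rho$ already employed in the proof of Proposition \ref{local}, the exponents balance precisely when $5/6\le\alpha\le 5/4$, which is the admissible range of this section. Once this term is shown to pass to the limit continuously in $\rho$, the remaining convergences (linear terms, endpoint values, pressure pairings) are routine consequences of Lemma \ref{lem2.1} and the regularity of $u$ and $v$.
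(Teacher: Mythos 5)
Your argument is correct in substance and reaches the same identity, but it is structured differently from the paper's proof. The paper treats $u$ and $v$ symmetrically: it mollifies \emph{both} solutions in time (and then smooths in space, producing $u_\rho^k$, $v_\rho^k$), tests the weak formulation of each against the mollification of the other, and cancels the two time-derivative integrals $\int_0^t\langle v,\partial_s u_\rho\rangle\,ds+\int_0^t\langle u,\partial_s v_\rho\rangle\,ds=0$ by exploiting the evenness of the kernel $J$; the endpoint pairings then each contribute only $\tfrac12\langle u(t),v(t)\rangle$ (resp.\ $\tfrac12\langle u(0),v(0)\rangle$), because the one-sided convolution $\int_0^t J_\rho(s-\tau)\,d\tau$ captures only half the mass at $s=t$, and the two halves sum to the full inner product. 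You instead exploit the extra regularity of the strong solution ($u_t\in L^2((0,T);L^2)$, which indeed follows from the construction in Proposition \ref{local}) to pair $v$ directly against the a.e.\ form of the equation for $u$, mollify only $u$ when it serves as a test function for $v$, and cancel the two occurrences of $\int_0^t\langle u_\tau,v\rangle\,d\tau$ by subtraction. This is more direct and avoids the evenness trick, at the price of using that one of the two solutions is strong; the paper's symmetric version is closer to the classical Serrin--Masuda scheme. The treatment of the nonlinear terms (integration by parts to put the derivative on the smooth factor, the Gagliardo--Nirenberg exponents that close precisely for $5/6\le\alpha\le 5/4$, and the algebra $\langle u\cdot\nabla u,v\rangle+\langle v\cdot\nabla v,u\rangle=\langle w\cdot\nabla u,v\rangle=-\langle w\cdot\nabla u,w\rangle$ via $\langle w\cdot\nabla u,u\rangle=0$) coincides with the paper's.

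Two technical points in your plan need tightening. First, $u^\rho$ is only $H^{2\alpha}$ in space, while Lemma \ref{lemL2} as stated admits test functions in $C^\infty((0,T)\times C^\infty_0)$; you still need the spatial approximation step (the paper's $u^k_\rho$ with the convergence \eqref{app}) or an explicit density argument extending \eqref{I} to $H^1([0,t];H^{2\alpha})$ test functions. Second, ``extending $u$ by zero outside $[0,T]$'' and convolving gives $u^\rho(0)\to\tfrac12 u(0)$ rather than $u(0)$, since only half of the (even) kernel's mass lies in $[0,T]$ at the left endpoint; as written your endpoint term would come out as $\tfrac12\langle v(0),u(0)\rangle$. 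You should either extend $u$ continuously across $\tau=0$ (legitimate since $u\in C([0,T];L^2)$) or keep the one-sided mollification and track the factors of $\tfrac12$ exactly as the paper does. Both fixes are routine, so this is a repairable gap rather than a flaw in the approach.
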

\begin{proof}
It follows from Lemma \ref{lem2.1}
that $u_{\rho} \in
H^{1}([0,T];\,H^{2\alpha})$,
$v_{\rho} \in
H^{1}([0,T];\,H^{\alpha})$ ~and
$$\lim_{\rho\rightarrow0}\|u_{\rho}-u\|_{L^{2}((0,\,T);\,H^{2\alpha})}=0,~
\lim_{\rho\rightarrow0}\|v_{\rho}-v\|_{L^{2}((0,\,T);\,H^{\alpha})}=0.
$$
 Due to the fact that $C^{\infty}_{0}(\Omega)$ is dense
 in $H^{\alpha}(\Omega)$ ($H^{2\alpha}(\Omega)$) and Lemma  2.2 in \cite{[M]}, there exist $u^{k}_{\rho},\,v^{k}_{\rho}\in C^{\infty}([0,\,t];\,C^{\infty}_{0})$ such that
\be\label{app} \lim_{k\rightarrow \infty}
\|u^{k}_{\rho}-u_{\rho}\|_{H^{1}([0,\,t];\,H^{2\alpha})}=0,~
\lim _{k\rightarrow \infty}
\|v^{k}_{\rho}-v_{\rho}\|_{H^{1}([0,\,t];\,H^{\alpha})}=0.
\ee Choosing $u^{k}_{\rho}$ and $v^{k}_{\rho}$ as test functions in
\eqref{I} respectively, one has
\be\ba\label{14}\int^{t}_{0}\langle v,\partial_{s}u_{\rho}^{k}\rangle-
\nu\langle\Lambda^{\alpha}v,\Lambda^{\alpha}u_{\rho}^{k}\rangle-
\langle v\cdot\nabla v,\,u_{\rho}^{k}\rangle \, ds&=\langle v(t),u_{\rho}^{k}(t)\rangle-\langle v(0),u_{\rho}^{k}(0)\rangle,\\
\int^{t}_{0}\langle
u,\partial_{s}v_{\rho}^{k}\rangle-\nu\langle\Lambda^{\alpha}u,
\Lambda^{\alpha}v_{\rho}^{k}\rangle- \langle u\cdot\nabla
u,\,v_{\rho}^{k}\rangle \, ds&= \langle
u(t),v_{\rho}^{k}(t)\rangle-\langle u(0),v_{\rho}^{k}(0)\rangle.
\ea\ee
Now we take  the limit in the above equations.
 By \eqref{app}, it is direct to get
\[\ba
 \lim _{k\rightarrow \infty}\int^{t}_{0}\langle v,\partial_{s}u_{\rho}^{k}\rangle \, ds
 =\int^{t}_{0}\langle v,\partial_{s}u_{\rho}\rangle \, ds ~\text{and}~ \lim _{k\rightarrow \infty}\int^{t}_{0}\langle u,\partial_{s}v_{\rho}^{k}\rangle \, ds
 =\int^{t}_{0}\langle u,\partial_{s}v_{\rho}\rangle \, ds.
\ea\]
Noting that the function $J$ defined in Lemma \ref{lem2.1} is
an even function, we have
\[\ba
\int^{t}_{0}\langle v,\partial_{s}u_{\rho}\rangle
ds+\int^{t}_{0}\langle
u,\partial_{s}v_{\rho}\rangle \,ds&=\int^{t}_{0}\int^{t}_{0}\partial_{s}
J_{\rho}(s-\tau)\langle u(\tau),\,v(s)\rangle \,dsd\tau+\int^{t}_{0}\langle
u,\partial_{s}v_{\rho} \rangle \,ds\\
&=-\int^{t}_{0}\int^{t}_{0}\partial_{\tau}
J_{\rho}(s-\tau)\langle u(\tau),\,v(s)\rangle \,dsd\tau+\int^{t}_{0}\langle
u,\partial_{s}v_{\rho} \rangle \,ds\\
&=-\int^{t}_{0}\int^{t}_{0}\partial_{\tau}
J_{\rho}(\tau-s)\langle u(\tau),\,v(s)\rangle \,dsd\tau+\int^{t}_{0}\langle
u,\partial_{s}v_{\rho} \rangle \,ds,\\
&=-\int^{t}_{0}\langle \partial_{\tau}v_{\rho},u\rangle d\tau+\int^{t}_{0}\langle
u,\partial_{s}v_{\rho} \rangle \,ds=0.
\ea\]
With the help of Lemma \ref{lem2.1} and \eqref{app}, we obtain
\[
  \lim _{\rho\rightarrow 0}\lim _{k\rightarrow \infty}\int^{t}_{0}\langle  \Lambda^{\alpha}v,\Lambda^{\alpha}u_{\rho}^{k}\rangle \, ds=
\int^{t}_{0}\langle  \Lambda^{\alpha}v,\Lambda^{\alpha}u\rangle \, ds,
\]
\[
\lim _{k\rightarrow \infty}\langle
v(t),u_{\rho}^{k}(t)\rangle \,ds=\langle v(t),u_{\rho}(t)\rangle,
\]
where the notation $\lim\limits _{\rho\rightarrow 0}\lim\limits _{k\rightarrow \infty}$ means that one first passes to the limit of $k$ and then that of $\rho$.

Thanks to  the  $L^{2}$ weak continuity of weak solutions
proved in Theorem \ref{weak solution}  in appendix \ref{appendix}, we deduce
\[\ba
\langle v(t),u_{\rho}(t)\rangle
&=\langle v(t),\int_{0}^{t}\frac{1}{\rho}J(\frac{t-s}{\rho})u(s)ds\rangle\\
&=\langle v(t),\int_{0}^{\frac{t}{\rho}}J(s')u(t-\rho s')ds'\rangle\\
&=\langle v(t),\int_{0}^{1}J(s')u(t)ds\rangle+\int_{0}^{1}J(s')\langle v(t),u(t-\rho s')-u(t)\rangle \,ds'\\
&=\f{1}{2}\langle v(t),u(t)\rangle  +o(\rho),\ea\]
where we have used the fact that $\int^{1}_{0}J_{\rho}(z)dz=\f{1}{2}$ and $\rho<t$, which yields
\[
\lim _{\rho\rightarrow 0}\langle v(t),u_{\rho}(t)\rangle
=\f{1}{2}\langle v(t),u(t)\rangle.
\]
Likewise,
$$\ba
&\lim _{\rho\rightarrow 0}\lim _{k\rightarrow
\infty}\int^{t}_{0}\langle
\Lambda^{\alpha}u,\,\Lambda^{\alpha}v_{\rho}^{k}\rangle \, ds=
\int^{t}_{0}\langle  \Lambda^{\alpha}u,\,\Lambda^{\alpha}v\rangle \, ds,\\
&\lim _{\rho\rightarrow 0}\lim _{k\rightarrow \infty}\langle u(t),\,v_{\rho}^{k}(t)\rangle  =\f{1}{2}\langle v(t),\,u(t)\rangle  ,\\
&\lim _{\rho\rightarrow 0}\lim _{k\rightarrow \infty}\langle u(0),\,v_{\rho}^{k}(0)\rangle=\f{1}{2}\langle u(0),\,v(0)\rangle,\\
&\lim _{\rho\rightarrow 0}\lim _{k\rightarrow \infty}\langle v(0),\,u_{\rho}^{k}(0)\rangle=\f{1}{2}\langle v(0),\,u(0)\rangle.\\
\ea$$
 Consequently, taking the limit in \eqref{14} leads to
\be\label{3.5}
 \langle  u(t), \,v(t)\rangle  +2\int^{t}_{0}\langle  \Lambda^{\alpha}u,\,\Lambda^{\alpha}v\rangle \, ds
=\langle u(0),\,v(0)\rangle-\lim _{\rho\rightarrow 0}\lim
_{k\rightarrow \infty}\int^{t}_{0}\Big[ \langle u\cdot\nabla
u,\,v_{\rho}^{k}\rangle \, ds +\langle v\cdot\nabla
v,\,u_{\rho}^{k}\rangle \Big] \,ds .\ee
It remains to pass to the limit in nonlinear terms in \eqref{3.5}.
In order to do this, using the Gagliardo-Nirenberg
inequality
$$
\|\nabla u\|_{L^{\f{3}{\alpha}}}\leq
C\|\Lambda^{\alpha}u\|_{L^{2}}^{\f{6\alpha-5}{2\alpha}}
\|\Lambda^{2\alpha}u\|^{\f{5-4\alpha}{2\alpha}}_{L^{2}},
$$
and
$$
\|v_{\rho}^{k}-v \|^{2}_{L^{\frac{6}{3-2\alpha}}}\leq C\|\Lambda^{\alpha}(v_{\rho}^{k}-v ) \|^{2}_{L^{2}},
$$
we see that
\be\label{hao1}\ba
&\int^{t}_{0}\langle u\cdot\nabla u,  v_{\rho}^{k}-v \rangle \,ds\\
\leq& \Big(\int^{t}_{0}\|u\cdot \nabla u\|^{2}_{L^{\frac{6}{3+2\alpha}}}\,ds \Big)^{\frac{1}{2}}
\Big(\int^{t}_{0}\|v_{\rho}-v\|^{2}_{L^{\frac{6}{3-2\alpha}}}\,ds \Big)^{\frac{1}{2}}\\
\leq&\Big(\int^{t}_{0}\|u\|^{2}_{L^{2}}\|\nabla u\|^{2}_{\frac{3}{\alpha}}\,ds \Big)^{\frac{1}{2}}
\Big(\int^{t}_{0}\|v_{\rho}^{k}-v \|^{2}_{L^{\frac{6}{3-2\alpha}}}\,ds \Big)^{\frac{1}{2}}\\
\leq&\Big(\int^{t}_{0}
\|u\|^{2}_{L^{2}}\|\Lambda^{\alpha}u(t)\|
_{L^{2}}^{\f{6\alpha-5}{\alpha}}
\|\Lambda^{2\alpha}u(t)\|^{\f{5-4\alpha}{\alpha}}
_{L^{2}}\,ds\Big)^{\frac{1}{2}}
\Big(\int^{t}_{0}\|\Lambda^{\alpha}(v_{\rho}^{k}-v ) \|^{2}_{L^{2}}\,ds \Big)^{\frac{1}{2}}
,\ea\ee
where we have used the H\"older inequality.

Utilizing the   H\"older inequality and the Gagliardo-Nirenberg
inequality
$$
\|v \|_{L^{\frac{12}{4\alpha+1}}}\leq C \|v\|_{L^{2}}^{\frac{8\alpha-5}{4\alpha}}
\|\Lambda^{\alpha}v\|_{L^{2}}^{\frac{5-4\alpha}{4\alpha}},
$$
and
$$
\|\nabla( u_{\rho}^{k}-u)\|_{L^{\frac{6}{5-4\alpha}}}\leq C\|\Lambda^{2\alpha}( u_{\rho}^{k}-u)\|_{L^{2}},
$$
by  integration by parts, we infer  that
\be\label{hao2}\ba
\int^{t}_{0}\langle v\cdot\nabla v, u_{\rho}^{k}-u\rangle \,ds
&=-\int^{t}_{0}\langle v\cdot\nabla( u_{\rho}^{k}-u),
  v\rangle \,ds\\
&\leq \int^{t}_{0}\|v^{2}\|_{L^{\frac{6}{4\alpha+1}}}\|\nabla( u_{\rho}^{k}-u)\|_{L^{\frac{6}{5-4\alpha}}}\,ds\\
&\leq \int^{t}_{0}\|v\|^{2}_{L^{\frac{12}{4\alpha+1}}}\|\nabla( u_{\rho}^{k}-u)\|_{L^{\frac{6}{5-4\alpha}}}\,ds\\
&\leq \Big(\int^{t}_{0}\|v\|_{L^{2}}^{\frac{8\alpha-5}{\alpha}}
\|\Lambda^{\alpha}v\|_{L^{2}}^{\frac{5-4\alpha}{\alpha}}\,ds\Big)^{\frac{1}{2}}
\Big(\int^{t}_{0}\|\Lambda^{2\alpha}( u_{\rho}^{k}-u)\|^{2}_{L^{2}}\,ds\Big)^{\frac{1}{2}}
.\ea\ee
In the light of
$\f{5-4\alpha}{\alpha}\leq2$, we can pass the limit in \eqref{hao1} and \eqref{hao2}, namely,
\begin{align}\lim _{\rho\rightarrow 0}\lim _{k\rightarrow \infty}
\int^{t}_{0}\langle  u\cdot\nabla u,\,v_{\rho}^{k}\rangle
ds&=\int^{t}_{0}\langle  u\cdot\nabla u,\,v\rangle \, ds,\label{2.20}\\
\lim _{\rho\rightarrow 0}\lim _{k\rightarrow \infty}
\int^{t}_{0}\langle  v\cdot\nabla v,\,u_{\rho}^{k}\rangle \, ds&=-\int^{t}_{0}\langle  v\cdot\nabla u,\,v\rangle \, ds.\label{2.19}
\end{align}
Consequently, we have
\be\label{2000} \lim _{\rho\rightarrow 0}\lim
_{k\rightarrow \infty}\int^{t}_{0}\big[ \langle u\cdot\nabla
u,\,v_{\rho}^{k}\rangle  +\langle v\cdot\nabla v,\,u_{\rho}^{k}\rangle
\big] \,ds=\int^{t}_{0}\langle w\cdot\nabla u,\,v\rangle \, ds. \ee
Using the integration by parts, we know that
$$
\int^{t}_{0}\langle  w\cdot\nabla u,\,u_{\rho}^{k}\rangle\,
ds=-\int^{t}_{0}\langle w\cdot\nabla u_{\rho}^{k},u\rangle \, ds.
$$
Exactly as in the derivation of \eqref{2.20} and \eqref{2.19}, we deduce that
$$
\lim _{\rho\rightarrow 0}\lim _{k\rightarrow \infty}
\int^{t}_{0}\langle  w\cdot\nabla u,\,u_{\rho}^{k}\rangle\,  ds=-
\int^{t}_{0}\langle  w\cdot\nabla u,\,u\rangle \, ds,
$$
and
$$
\lim _{\rho\rightarrow 0}\lim _{k\rightarrow \infty}
\int^{t}_{0}\langle  w\cdot\nabla u_{\rho}^{k},u\rangle\,  ds=
\int^{t}_{0}\langle  w\cdot\nabla u,\,u\rangle \, ds,
$$
which implies
\be\label{2.22} \int^{t}_{0}\langle  w\cdot\nabla
u,\,u\rangle  \,ds=0. \ee

Combining \eqref {3.5}  and \eqref{2000} with \eqref{2.22}, we obtain
\eqref{2.1}.
  \end{proof}
\begin{proof}[Proof of Proposition \ref{the1.1}]
In view of \eqref{2.1} and the energy inequality corresponding to $s=0$ in \eqref{SEI2},  straightforward calculations show that
 \be\label{21}\ba
&\|w(t)\|^{2}_{L^{2}}+2\int^{t}_{0}\|\Lambda^{\alpha}w(s)\|^{2}_{L^{2}}
\,ds\\
=&\|u(t)\|^{2}_{L^{2}}+\|v(t)\|^{2}_{L^{2}}
-2\langle u(t),\,v(t)\rangle +2\int^{t}_{0}\|\Lambda^{\alpha}u(s)\|^{2}_{L^{2}}
\,ds\\
&+2\int^{t}_{0}\|\Lambda^{\alpha}v(s)\|^{2}_{L^{2}}\,
ds-4\int^{t}_{0}\langle \Lambda^{\alpha}u(s),\Lambda^{\alpha}v(s)\rangle \,ds\\
\leq& \|u(0)\|^{2}_{L^{2}}+\|v(0)\|^{2}_{L^{2}}
-2\langle u(0),\, v(0)\rangle-2\int^{t}_{0}\langle w(s)\cdot\nabla w(s), \,u(s)\rangle \,ds\\
=&-2\int^{t}_{0}\langle w(s)\cdot\nabla u(s),\,w(s)\rangle\, \,ds.\ea\ee
By the H\"older inequality,  the Gagliardo-Nirenberg inequality used in \eqref{hao2} and the Young's inequality,
we get
$$\ba
-\int^{t}_{0}\langle w(s)\cdot\nabla u(s),
  \,w(s)\rangle\, ds
&\leq \int^{t}_{0}
\|w^{2}(s)\|_{L^{\frac{6}{4\alpha+1}}}\|\nabla u(s)\|_{L^{\frac{6}{5-4\alpha}}}\,ds\\
&\leq \int^{t}_{0}  \|w(s)\|_{L^{2}}^{\frac{8\alpha-5}{2\alpha}}
\|\Lambda^{\alpha}w(s)\|_{L^{2}}^{\frac{5-4\alpha}{2\alpha}}
\|\Lambda^{2\alpha}u(s)\|_{L^{2}}\,ds\\
&\leq \int^{t}_{0} \|\Lambda^{\alpha}w(s)\|_{L^{2}}^{2}\,ds+C\int^{t}_{0}\|w(s)\|^{2}_{L^{2}}
\|\Lambda^{2\alpha}u(s)\|_{L^{2}}^{\frac{4\alpha}{8\alpha-5}}\,ds
.\ea$$
This  together with   \eqref{21}  yields
\be\label{22}
\|w(t)\|^{2}_{L^{2}}+\int^{t}_{0}
\|\Lambda^{\alpha}w(s)\|^{2}_{L^{2}}\,
ds\leq C\int^{t}_{0}\|w(s)\|^{2}_{L^{2}}
\|\Lambda^{2\alpha}u(s)\|_{L^{2}}^{\frac{4\alpha}{8\alpha-5}}\,
ds.
\ee
 Thus, making use of
Gronwall's Lemma, we  accomplish the proof.
\end{proof}

\section{Eventual regularity and  Hausdorff dimension estimate}
\setcounter{section}{4}\setcounter{equation}{0}
We shall basically follow  the pathway of \cite[Section 6]{[Galdi]} to complete the proof of Theorem \ref{Hausdorff}  and
\ref{eventual regularization}  in this section.
\subsection{Eventual regularity}
This subsection focuses on
the eventual regularity of weak solutions, which means that there
 exists a $T^{\ast}>0$ such that every weak solution $u(t)$  is
a strong solution on $(T^{\ast},\infty)$.
\begin{proof}[Proof of Theorem \ref{eventual regularization}]
 By the global well-posedness result for small
solution in Proposition \ref{local} and Theorem 6.1
 in \cite{[Wu2]},
 it suffices to prove that there exists a $T^*>0$ (maybe large) such that
 \be\label{E1}
 \| u(T^{\ast})\|^{\frac{6\alpha-5}{2\alpha}}_{L^{2}}
 \|\Lambda^{\alpha}u(T^{\ast})\|
^{\frac{5-4\alpha}{2\alpha}}_{L^{2}}< C_{1}^{-1}\nu.
 \ee
 The proof of \eqref{E1} is easy.
Otherwise,  for any $ t\in(0,\infty)$, we have
$$
 \| u(t)\|^{\frac{6\alpha-5}{2\alpha}}_{L^{2}}
 \|\Lambda^{\alpha}u(t)\|
^{\frac{5-4\alpha}{2\alpha}}_{L^{2}}\geq C_{1}^{-1}\nu.$$
By means of energy inequality (the strong energy inequality \eqref{SEI} for $\tau=0$) $(\|u(0)\|_{L^{2}}
\geq\|u(t)\|_{L^{2}})$,  we obtain the uniform low
bound of $\|\Lambda^{\alpha}u(t)\|_{L^{2}}$. But this
contradicts with the energy inequality (the strong energy inequality \eqref{SEI} for $\tau=0$).

From the above discussion,
we could construct a
strong solutions $\tilde{u}$ with  the initial data
$u(T^{\ast})$. Together with  strong energy inequality \eqref{SEI} \[
      \|u(t)\|^{2}_{L^{2}}
       +2\nu\int^{t}_{T^{\ast}}\|\Lambda^{\alpha} u(s)\|^{2}_{L^{2}
      }\,ds\leq
       \|u(T^{\ast})\|^{2}_{L^{2}},  ~t\geq T^{\ast},                            \]
  and   Proposition \ref{the1.1},
we get  $\tilde{u}(t)=u(t)$ a.e. on $(T^{\ast},\infty)$.
This concludes the proof of Theorem \ref{eventual regularization}.
\end{proof}

\subsection{Hausdorff dimension estimate}
Based on the  eventual regularity of weak solutions, we know that the possible singular time points
of weak solutions to \eqref{GNS} must  be contained  in finite
interval $(0,T^{\ast})$. With the help of Proposition \ref{local} and
Proposition  \ref{the1.1},
 we have
 \begin{lemma} \label{1}
 The regular point set $\mathcal{R}$ can be
decomposed as follows \be\label{E2}\mathcal{R}=(\bigcup\limits_{i\in
A}(\tau_{i},\,s_{i})) \bigcup(T^{\ast},\infty),~s_{i}\in
\mathcal{IR},\ee where
$(\tau_{i},\,s_{i})\bigcap(\tau_{j},s_{j})=\emptyset$ for $i\neq j$
and the set  A is at most countable, where $T^*$ is same as in
\eqref{E1}.  Furthermore, the Lebesgue measure of the irregular
points set $\mathcal{IR}$ of the weak solution on time is zero.
\end{lemma}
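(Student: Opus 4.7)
The plan is to show that $\mathcal{R}$ is open and contains a set of full Lebesgue measure in $(0,\infty)$; the structural decomposition then follows from the standard partition of an open subset of $\mathbb{R}$ into disjoint open intervals. First, $\mathcal{R}$ is open because by definition of a regular point each $\tau\in\mathcal{R}$ lies in an open interval of regularity, and this whole interval lies in $\mathcal{R}$. Writing $\mathcal{R}$ as a countable disjoint union of maximal open intervals and using $(T^{\ast},\infty)\subset\mathcal{R}$ from Theorem~\ref{eventual regularization}, one component is of the form $(\tau_{0},\infty)$ with $\tau_{0}\leq T^{\ast}$, while the bounded components $(\tau_{i},s_{i})$, $i\in A$, have right endpoints $s_{i}\in\mathcal{IR}$ (otherwise $s_{i}\in\mathcal{R}$ would belong to a neighborhood of regularity, contradicting the maximality of $(\tau_{i},s_{i})$).

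Next I would introduce the set of admissible initial times
\begin{equation*}
\Sigma:=\{\tau>0:\ u(\tau)\in H^{\alpha}\text{ and the strong energy inequality \eqref{SEI} holds from }\tau\}.
\end{equation*}
Since $u\in L^{2}_{loc}((0,\infty);H^{\alpha})$ by Definition~\ref{1.1}(i), the set $\{\tau: u(\tau)\in H^{\alpha}\}$ has full Lebesgue measure, and \eqref{SEI} holds at almost every $\tau$ by Definition~\ref{1.1}(iii); hence $\Sigma$ has full Lebesgue measure in $(0,\infty)$. For each $\tau\in\Sigma$, Proposition~\ref{local} (together with \cite[Theorem 6.2]{[Wu2]} in the endpoint case $\alpha=5/6$) applied to initial datum $u(\tau)\in H^{\alpha}$ produces a strong solution $\tilde u\in L^{\infty}(H^{\alpha})\cap L^{2}(H^{2\alpha})$ on some interval $[\tau,\tau+\delta(\tau))$ with $\delta(\tau)>0$. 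The restriction of $u$ to this interval is itself a weak solution with initial value $u(\tau)$ whose energy inequality at time $\tau$ is precisely \eqref{SEI}, so the weak-strong uniqueness of Proposition~\ref{the1.1} (applied on the translated interval) forces $u\equiv\tilde u$ on $[\tau,\tau+\delta(\tau))$, and in particular $(\tau,\tau+\delta(\tau))\subset\mathcal{R}$.

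To conclude, let $\tau\in\mathcal{IR}\cap\Sigma$. Because $\tau\notin\mathcal{R}$ while $(\tau,\tau+\delta(\tau))\subset\mathcal{R}$, the point $\tau$ must be the left endpoint of some component of $\mathcal{R}$; since distinct points yield distinct components and $\mathcal{R}$ has only countably many components, the set $\mathcal{IR}\cap\Sigma$ is at most countable and therefore Lebesgue-negligible. Together with $|\Sigma^{c}|=0$ this gives $|\mathcal{IR}|=0$. The main obstacle I anticipate is the middle step, namely transferring Proposition~\ref{the1.1} to the translated initial value problem on $[\tau,\tau+\delta(\tau))$: this relies crucially on knowing that the restricted weak solution satisfies an energy inequality with initial value $\|u(\tau)\|_{L^{2}}^{2}$, which is exactly the reason the admissible set $\Sigma$ is defined through the strong energy inequality of Definition~\ref{1.1}(iii).
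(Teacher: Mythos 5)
Your proposal is correct and follows essentially the same route as the paper: almost every $\tau$ admits a local strong solution from $u(\tau)\in H^{\alpha}$ which coincides with $u$ by weak-strong uniqueness and the strong energy inequality, the maximal regularity intervals give the countable disjoint decomposition with irregular right endpoints, and the complement is null. Your explicit countability argument for $\mathcal{IR}\cap\Sigma$ via left endpoints of components is just a more detailed rendering of the paper's terse measure-zero step.
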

 \begin{proof}
 Note that the weak solution $u$ belongs to
$L^{2}((0,\,T^{\ast});\,H^{\alpha})$  and satisfies strong energy inequality \eqref{SEI}
\[
      \|u(t)\|^{2}_{L^{2}}
+2\nu\int^{t}_{\tau}\|\Lambda^{\alpha} u(s)\|^{2}_{L^{2}         }\,ds\leq \|u(\tau)\|^{2}_{L^{2}},~\tau=0\, \text{or}\,a.e.\tau\geq0 ~\text{and}~  t\in
[0,T^{\ast}).
 \]
Therefore, for {\it a.e.}  $\tau\in (0,T^{\ast})$, the weak
solution $u(t)$ is a strong one on some interval
$(\tau,\tau+T(\tau))$ with the initial data $u(\tau)$ due to  Proposition \ref{local} and
Proposition  \ref{the1.1}.  The interval
$(\tau,\tau+T(\tau))$ can be extended to a maximal one $(\tau',s')$
 containing $(\tau,\tau+T(\tau))$ such that $s'\in \mathcal{IR}$.
Thence, $\mathcal{R}=(\bigcup\limits_{i\in A}(\tau_{i},\,s_{i}))
\bigcup(T^{\ast},\infty),$ where
$(\tau_{i},\,s_{i})\bigcap(\tau_{j},s_{j})=\emptyset$ for $i\neq j$
and the set  A is at most countable since the set composed by
mutually disjoint open intervals belonging to the line is finite or
countable. Claim \eqref{E2} is proved.\\
 Denote $I=\{s\in (0,T^{\ast})|u(s)\in H^{\alpha}\}$. It is clear that $$|(0,T^{\ast})\setminus I|=|I\setminus(\bigcup\limits_{i\in
A}(\tau_{i},\,s_{i}))|=0 .
$$
The Lebesgue measure of the $\mathcal{IR}$  is zero.
 \end{proof}
 It should be point out that the proof of the endpoint case $\alpha=5/6$ in Theorem \ref{Hausdorff} has been achieved based on the fact that the $1$ dimensional  Hausdorff measure coincides with Lebesgue measure on $\mathbb{R}$. Consequently, the following proof focuses its attention on the  regime $5/6<\alpha<5/4$.
 In order to  conclude the rest part of proof of Theorem \ref{Hausdorff}, we need exploit the necessary condition for $t_{0}$ to be a possible irregular point to generalized  Navier-Stokes equations \eqref{GNS} similar to  the Navier-Stokes equations
 $$
 \|\nabla u(t)\|_{L^{2}}\geq \frac{\nu^{3/4}C}{(t_{0}-t)^{1/4}},~ t<t_{0}
 $$
appearing  in Leray's  groundbreaking paper \cite{[Leray]}.

As mentioned  in Remark \ref{meaning}, the key inequality \eqref{grawall} help us to obtain
\begin{proposition}\label{coro}
Assume that $t_{0}$ is an irregular point of a weak solution $u$.
Then
\be\label{beha} \lim_{t\rightarrow
t_{0}}\|\Lambda^{\alpha}u(t)\|_{L^{2}} =\infty, \ee
Furthermore, for $5/6<\alpha<5/4$, there holds \be\label{blowup}
\|\Lambda^{\alpha}u(t)\|_{L^{2}}
\geq\f{\nu^{\f{5-2\alpha}{4\alpha}}C}{(t_{0}-t)^{\f{6\alpha-5}
{4\alpha}}},~ t<t_{0}. \ee
\end{proposition}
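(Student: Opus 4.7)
Both assertions are direct consequences of combining the quantitative local-existence time of Proposition~\ref{local} with the weak-strong uniqueness of Proposition~\ref{the1.1}. The unifying mechanism: if $\|\Lambda^{\alpha}u(t)\|_{L^{2}}$ is too small at some $t<t_{0}$, then Proposition~\ref{local} supplies a strong solution on $[t,t+T_{\ast}(t))$ whose length is a decreasing function of $\|\Lambda^{\alpha}u(t)\|_{L^{2}}$ (and of $\nu$); weak-strong uniqueness then identifies this strong solution with $u$ on that interval, producing a regular extension of $u$ past $t_{0}$ and contradicting the definition of an irregular point.

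\textbf{Proof of \eqref{beha}.} Argue by contradiction: suppose $\liminf_{t\to t_{0}^{-}}\|\Lambda^{\alpha}u(t)\|_{L^{2}}<\infty$, the liminf being taken along the full-measure set $\mathcal{T}\subset(0,t_{0})$ of times $\tau$ at which $u(\tau)\in H^{\alpha}$ and the strong energy inequality \eqref{SEI} holds from $\tau$. Pick $\tau_{n}\in\mathcal{T}$ with $\tau_{n}\uparrow t_{0}$ and $\|\Lambda^{\alpha}u(\tau_{n})\|_{L^{2}}\leq M$ for some fixed $M$. Proposition~\ref{local} applied to the translated problem with initial datum $u(\tau_{n})$ produces a strong solution on $[\tau_{n},\tau_{n}+T_{0})$, where $T_{0}>0$ depends only on $M$ and $\nu$ and is therefore independent of $n$. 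For $n$ large enough, $\tau_{n}+T_{0}>t_{0}$. Because $u(\cdot+\tau_{n})$ is itself a weak solution with the same initial datum (here \eqref{SEI} from $\tau_{n}$ is essential), Proposition~\ref{the1.1} forces the two to agree on $[\tau_{n},\tau_{n}+T_{0})$, providing a regular extension of $u$ across $t_{0}$ and contradicting $t_{0}\in\mathcal{IR}$.

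\textbf{Proof of \eqref{blowup}.} By Lemma~\ref{1} the irregular point $t_{0}$ is the right endpoint of some maximal regularity interval $(\tau_{i},t_{0})$ on which $u$ is classical, so $u(t)\in H^{\alpha}$ for every $t\in(\tau_{i},t_{0})$ and the energy identity holds automatically. For such $t$, Proposition~\ref{local} furnishes a strong solution on $[t,t+T_{\ast}(t))$ with
$$T_{\ast}(t)=\frac{\nu^{(5-2\alpha)/(6\alpha-5)}}{\tfrac{2\alpha C}{6\alpha-5}\,\|\Lambda^{\alpha}u(t)\|_{L^{2}}^{4\alpha/(6\alpha-5)}}.$$
Were $T_{\ast}(t)>t_{0}-t$, the same weak-strong uniqueness argument would extend $u$ regularly past $t_{0}$, a contradiction. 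Hence $T_{\ast}(t)\leq t_{0}-t$, and solving this inequality for $\|\Lambda^{\alpha}u(t)\|_{L^{2}}$ yields exactly the asserted lower bound \eqref{blowup}, with the correct powers of $\nu$ and $(t_{0}-t)$. Under the convention $\|\Lambda^{\alpha}u(t)\|_{L^{2}}=+\infty$ when $u(t)\notin H^{\alpha}$, the bound is vacuous outside the regularity interval, and only $t\uparrow t_{0}^{-}$ is of interest.

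\textbf{Main obstacle.} The only delicate point is justifying the restart: for Proposition~\ref{the1.1} to apply at time $\tau_{n}$, the shifted function $u(\cdot+\tau_{n})$ must itself be a weak solution in the sense of Definition~\ref{1.1}, and in particular \eqref{SEI} must hold from $\tau_{n}$. This is why the restart times are drawn from the full-measure subset $\mathcal{T}$ rather than from arbitrary times approaching $t_{0}$. In the proof of \eqref{blowup} this subtlety evaporates, since $u$ is already classical on $(\tau_{i},t_{0})$ and the energy equality is automatic there.
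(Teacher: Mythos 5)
Your proof of \eqref{beha} is the paper's argument: pick times $t_{k}\uparrow t_{0}$ along which $\|\Lambda^{\alpha}u(t_{k})\|_{L^{2}}$ stays bounded, restart via Proposition \ref{local} with a uniform existence time $T_{0}$, and use Proposition \ref{the1.1} to extend $u$ regularly past $t_{0}$; your extra care about drawing the restart times from the full-measure set where the strong energy inequality holds from $\tau$ is a point the paper glosses over, and it is a genuine (if minor) improvement. For \eqref{blowup} you take a different route from the paper. The paper integrates the Riccati-type differential inequality \eqref{grawall} on $[t,\tau)$ to get $\|\Lambda^{\alpha}u(t)\|_{L^{2}}^{-4\alpha/(6\alpha-5)}-\|\Lambda^{\alpha}u(\tau)\|_{L^{2}}^{-4\alpha/(6\alpha-5)}\leq C\nu^{(2\alpha-5)/(6\alpha-5)}(\tau-t)$ and then sends $\tau\to t_{0}$ using \eqref{beha}; you instead invert the local existence time, arguing that maximality forces $T_{\ast}(t)\leq t_{0}-t$ and solving for $\|\Lambda^{\alpha}u(t)\|_{L^{2}}$. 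The two mechanisms are dual (the formula for $T_{\ast}$ is itself obtained by integrating \eqref{grawall}) and yield the identical rate with the identical powers of $\nu$ and $t_{0}-t$. Your version has the advantage of using only the \emph{statement} of Proposition \ref{local}, whereas the paper's direct integration tacitly requires that \eqref{grawall} — established there for the Galerkin approximations $u^{N}$ — survives passage to the limit solution $u$ on the regularity interval; on the other hand, the paper's route does not need the weak-strong uniqueness machinery a second time. Both arguments are correct, so your proposal stands.
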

\begin{proof}
If  \eqref{beha} was not true, we can pick up $t_{k}$ such that
$t_{k}\rightarrow t_{0}$ as $k\rightarrow \infty$ with
$t_{k}<t_{0}$, moreover,
$$
\|\Lambda^{\alpha}u(t_{k})\|_{L^{2}}\leq C_{2}.
$$
Thanks to Proposition \ref{local},  Theorem \ref{the1.1} and the
strong energy inequality, proceeding as before,
the weak solution $u(t)$ can be seen as a strong solution
with the  initial data $u(t_{k})$ on $(t_{k},t_{k}+T(t_{k}))$, where
 $$
T(t_{k})=\f{\nu^{\f{5-2\alpha}{6\alpha-5}}}
{\f{2\alpha C}{6\alpha-5}\|\Lambda^{\alpha}u(t_{k})\|_{L^{2}}^{\f{4\alpha}{6\alpha-5}}
}\geq C_{2}^{-\f{4\alpha}{6\alpha-5}}C_{3}=T_{0},
 $$
 where $T_{0}$ does not   depend  on $k$.\\
We can choose $t_{k'}$ such that $t_{k'}+T_{0}>t_{0}$,
so the weak solution $u(t)$ is a strong solution on $(t_{k},t_{k}+T_{0})$,
 which is a contradiction to the fact that $t_{0} $ is an irregular point
  of the weak solution. Thus \eqref{beha} holds true.

Integrating the inequality \eqref{grawall} with respect with time
variable on $[t,\tau) $, we obtain
$$
\f{1}{\|\Lambda^{\alpha}u(t)\|_{L^{2}}^{\f{4\alpha}
{6\alpha-5}}}-\f{1}
{\|\Lambda^{\alpha}u(\tau)\|_{L^{2}}^{\f{4\alpha}{6\alpha-5}}
}\leq C\nu^{\f{2\alpha-5}{6\alpha-5}}(\tau-t),~t<\tau<t_{0}.
$$
Let $\tau\rightarrow t_{0}$ and \eqref{beha} yields  that
$$
\|\Lambda^{\alpha}u(t)\|_{L^{2}}\geq
\f{\nu^{\f{5-2\alpha}{4\alpha}}C}{(t_{0}-t)^{\f{6\alpha-5}
{4\alpha}}},
$$
which proves \eqref{blowup}.
\end{proof}
 Finally, we are ready to prove Theorem \ref{Hausdorff}.
\begin{proof}[Proof of Theorem \ref{Hausdorff}]
It follows from  the energy inequality (the strong energy inequality \eqref{SEI} for $\tau=0$) that
$$
\sum_{i\in A}\int^{s_{i}}_{\tau_{i}}
\|\Lambda^{\alpha}u(s)\|^{2}_{L^{2}}\,ds\leq C
\|u(0)\|^{2}_{L^{2}},$$ where  $A$ is defined as in
Lemma \ref{1}.\\
 With the  help of \eqref{blowup}, we infer that
$$
\sum_{i\in A}\int^{s_{i}}_{\tau_{i}}
\|\Lambda^{\alpha}u(s)\|^{2}_{L^{2}}\,ds\geq
C\sum_{i\in A}\int^{s_{i}}_{\tau_{i}}\f{1}{(s_{i}-s)^{\f{6\alpha-5}
{2\alpha}}}\,ds\geq
C\sum_{i\in A}\big(s_{i}-\tau_{i}\big)^{\f{5-4\alpha}
{2\alpha}}.
$$
Therefore, for any $\varepsilon>0$, there exists a
finite part $A_{1}$ of $A$ such that
$$
\sum_{i\in A\setminus A_{1}}\big(s_{i}-\tau_{i}\big)^{\f{5-4\alpha}
{2\alpha}}<\varepsilon.
$$
We denote the   finite interval $(0,T^{\ast})\setminus(\bigcup_{i\in A_{1}}(\tau_{i},\,s_{i}))=\bigcup\limits_{j=1}^{N} r_{j}$.\\
We note that
$$
\bigcup^{N}_{j=1}r_{j}=\mathcal{IR}\bigcup\Big(\bigcup_{i\in A\setminus
 A_{1}}\big(\tau_{i},\,s_{i}\big)\Big).
$$
Using
$|\mathcal{IR}|=0$, we find
$$
\text{diam} \{r_{j}\}=\sum_{\begin{subarray}{c}i\in A\setminus
 A_{1}\\(\tau_{i},\,s_{i})\subset r_{j}\end{subarray}}
(s_{i}-\tau_{i})\leq\sum_{i\in A\setminus  A_{1}}(s_{i}-\tau_{i}).
$$
Direct estimates give
$$\ba
\sum^{N}_{j=1}{\text{diam} \{r_{j}\}}^{\f{5-4\alpha}
{2\alpha}}&=\sum^{N}_{j=1} \Big(\sum_{\begin{subarray}{c}i\in
A\setminus A_{1}\\(\tau_{i},\,s_{i})\subset r_{j}\end{subarray}}
\big(s_{i}-\tau_{i}\big)\Big)^{\f{5-4\alpha}
{2\alpha}}\\
&\leq\sum^{N}_{j=1}\Big(\sum_{\begin{subarray}{c}i\in A\setminus
A_{1}\\(\tau_{i},\,s_{i})\subset r_{j}\end{subarray}}
\big(s_{i}-\tau_{i}\big)^{\f{5-4\alpha}
{2\alpha}}\Big)\\
&=\sum_{i\in A\setminus A_{1}} (s_{i}-\tau_{i})^{\f{5-4\alpha}
{2\alpha}}<\varepsilon. \ea$$ In view of   Definition \ref{HD}, the
proof is complete.
\end{proof}


\appendix
\section{ Existence of weak solutions}
\label{appendix}
\setcounter{section}{5}\setcounter{equation}{0}
To make our paper more self-contained and more readable, we outline the existence proof of weak solutions to \eqref{GNS}. The existence   of weak solutions of global weak solutions to the generalized MHD equations has been established by Wu in \cite{[Wu1]}. Contrary to Wu's work, we will modify some critical estimate. In addition, we show that the weak solutions satisfy the  strong energy inequality and  $L^{2}$ weak continuity.
\begin{theorem}\label{weak solution}
Let $u(0)$ be a divergence-free vector fields with finite energy. For any  $T>0$, there exists a weak solution to \eqref{GNS} with $\alpha>0$ in the following sense
\begin{enumerate}[(1)]
 \item$u\in L^{\infty}((0,\,T);\,L^{2})\cap L^{2}((0,\,T);\,H^{\alpha}).$
\item$u$~solves (\ref{GNS})  in the sense of distributions, namely,
   \be\label{dis1}
  \int_{0}^{T}\langle u,\,\phi_{t}\rangle-\nu\langle\Lambda^{\alpha}u,\,
  \Lambda^{\alpha}\phi\rangle
-\langle u\cdot\nabla u,\,\phi\rangle \,dt = -\langle u(0),\,\phi(0)\rangle,
  \ee
  for any $\phi\in C_{0}^{\infty}((0,\,T)\times \Omega)$.
 \item $u$ is $L^2$ weakly
continuous on the interval $[0,\,T)$, that is,
$$
\lim_{t\rightarrow t_{0}}\langle u(t)-u(t_{0}), \varphi \rangle=0,
~~\text{for all} \,~ t\in (0,\,T) \,~ \text{and all} ~\, \varphi\in L^{2}(\Omega)$$
and $L^2$
strongly continuous at time $0$, namely,
$$\lim_{t\rightarrow0}\|u(t)-u(0)\|_{L^{2}}=0.$$
\item $u$ verifies the strong   energy inequality
\be\label{SEI2} \|u(t)\|^{2}_{L^{2}}
+2\nu\int^{t}_{\tau}\|\Lambda^{\alpha} u(s)\|^{2}_{L^{2}
}\,dsu\leq \|u(\tau)\|^{2}_{L^{2}},~\ee
$\tau=0$ or $a.e. ~\tau>0$,  and~each   $t\in[\tau,T)$.
\end{enumerate}
\end{theorem}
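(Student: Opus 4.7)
The plan is to construct the weak solution via the Faedo--Galerkin approximation already hinted at in the Remark preceding the theorem, and then verify the four listed properties. In the first stage I define the approximate solutions $u^N(x,t) = \sum_{|k|\le N} \hat u^N(k,t)e^{ik\cdot x}$ solving
\[
\partial_t u^N + \nu \Lambda^{2\alpha} u^N + P_N(u^N\cdot\nabla u^N) + P_N\nabla p^N = 0,\qquad \mathrm{div}\,u^N=0,
\]
with $u^N(0)=P_N u(0)$. Because the divergence-free trigonometric polynomials of degree $\le N$ form a finite-dimensional space on the torus, this reduces to a locally-Lipschitz ODE with a unique smooth solution. Testing with $u^N$ and using $\langle P_N(u^N\cdot\nabla u^N),u^N\rangle = \langle u^N\cdot\nabla u^N,u^N\rangle = 0$ yields the energy identity
\[
\|u^N(t)\|_{L^2}^2 + 2\nu\int_0^t \|\Lambda^\alpha u^N(s)\|_{L^2}^2\,ds = \|P_N u(0)\|_{L^2}^2 \le \|u(0)\|_{L^2}^2,
\]
which gives uniform bounds in $L^\infty_tL^2_x\cap L^2_tH^\alpha_x$ and extends the solutions globally in time.

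In the second stage I extract a convergent subsequence. Estimating the nonlinearity by duality using Sobolev embedding (on $\mathbb{T}^3$ with zero average) shows that $P_N(u^N\cdot\nabla u^N)$, and hence $\partial_t u^N$, is uniformly bounded in $L^{q}((0,T);H^{-\beta})$ for some $q>1$ and $\beta$ large enough (depending on $\alpha$). The Aubin--Lions lemma applied to the triple $H^\alpha \hookrightarrow\hookrightarrow L^2 \hookrightarrow H^{-\beta}$ then yields a subsequence $u^N\to u$ strongly in $L^2((0,T);L^2)$ and weakly in $L^2((0,T);H^\alpha)$, with $u\in L^\infty((0,T);L^2)$. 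This strong convergence is enough to pass to the limit in $\langle u^N\cdot\nabla u^N,\phi\rangle = -\langle u^N\otimes u^N,\nabla\phi\rangle$ and hence to obtain the distributional identity \eqref{dis1}, proving items (1) and (2).

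In the third stage I derive the weak $L^2$ continuity. From the equation, $\partial_t u\in L^q((0,T);H^{-\beta})$, so $u\in C([0,T];H^{-\beta})$. Combined with the uniform $L^\infty_tL^2_x$ bound, a standard density argument (any $\varphi\in L^2$ is approximated in $H^{-\beta}$ by smooth test functions against which $\langle u(t),\cdot\rangle$ is continuous in $t$) upgrades this to $L^2$-weak continuity, giving item (3) except for strong continuity at $t=0$.

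The fourth stage, the strong energy inequality \eqref{SEI2}, is where the main technical difficulty lies. Along the same subsequence, $u^N(t)\to u(t)$ strongly in $L^2$ for almost every $t\in(0,T)$; let $\mathcal{N}\subset(0,T)$ denote the null set where this fails. For $\tau\in\{0\}\cup((0,T)\setminus\mathcal{N})$ and $t\in[\tau,T)$, the approximate energy identity gives
\[
\|u^N(t)\|_{L^2}^2 + 2\nu\int_\tau^t \|\Lambda^\alpha u^N(s)\|_{L^2}^2\,ds = \|u^N(\tau)\|_{L^2}^2,
\]
and passing $N\to\infty$ using strong convergence on the right-hand side (and $P_N u(0)\to u(0)$ strongly for $\tau=0$) together with weak lower semicontinuity of the norms on the left yields \eqref{SEI2}. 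Finally, strong continuity at $t=0$ follows from weak continuity plus $\limsup_{t\to 0^+}\|u(t)\|_{L^2}\le\|u(0)\|_{L^2}$, which is a direct consequence of \eqref{SEI2} at $\tau=0$. The main obstacle is ensuring the subsequence $u^{N_k}(\tau)\to u(\tau)$ strongly in $L^2$ for a.e.\ $\tau$; this is handled by a Cantor diagonal argument combined with the uniform negative-Sobolev bound on $\partial_t u^N$, which lets one apply a pointwise-in-time Ascoli-type extraction.
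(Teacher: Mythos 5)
Your proposal is correct and follows essentially the same route as the paper's Appendix A: the same Galerkin system with $P_N$, the same uniform energy identity, a negative-Sobolev bound on $\partial_t u^N$ feeding into Aubin--Lions, pointwise-in-time weak convergence for item (3), a.e.-in-time strong $L^2$ convergence of a subsequence to pass to the limit in the energy identity for item (4), and strong continuity at $t=0$ from weak continuity plus convergence of norms (the paper phrases this via the parallelogram identity, you via the Radon--Riesz property, which is the same argument).
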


\begin{proof}
We will apply the Galerkin method in the periodic domain similar to the classical Friedrich's method in the whole space to construct
 an approximate solution sequence. Let us define the operator $P_{N}$ by
$$
 P_{N}f(x)=\mathcal{F}^{-1}(1_{k\leq N}(k)\hat{u}(k))=\sum_{|k|\leq N}e^{ik\cdot x}\hat{u}(k)=\sum_{|k|\leq N}\int_{\Omega}e^{ik\cdot(x-x')}u(x')\,dx'.
$$
We seek approximate solution
$u^{N}=\sum\limits_{|k|\leq N}e^{ik\cdot x}\hat{u}(k)$
satisfy the following equations
\be\label{app1}
\partial_{t} u^{N}+\nu \Lambda^{2\alpha}u^{N}+P_{N}(u^{N}\cdot\nabla u^{N})+P_{N}\nabla p^{N}=0,~~~\text{div}\,u^{N}=0,\ee
together with the initial condition
$$u^{N}(x,0)=P_{N}u(x,0).$$
This system can be viewed as an ordinary differential equations on $L^{2}$. The Cauchy-Lipschitz theorem for ordinary differential system gives us the existence of a positive maximal time $T_{n}$ such that this system has a unique solution $u^{N}\in C([0,T_{n}];\,L^{2})$. According to the finite time blow-up theorem for ordinary differential system, it suffices to show the uniform energy estimates on $u^{N}$, which yields $T_{n}=T$ for an arbitrary but fixed $T>0$.

Multiplying \eqref{app1} by $u^{N}$ and integrating by parts, we see that
$$
\frac{1}{2}\frac{d}{dt}\|u^{N}(t)\|_{L^{2}}^{2}+\|\Lambda^{\alpha }u^{N}(t)\|_{L^{2}}^{2}=0,~~t\leq T,
$$
which implies that
\be\label{app2}
\frac{1}{2} \|u^{N}(t)\|_{L^{2}}^{2}+\int^{t}_{0}\|\Lambda^{\alpha }u^{N}(s)\|_{L^{2}}^{2}\,ds=\frac{1}{2} \|u^{N}(0)\|_{L^{2}}^{2}\leq\frac{1}{2} \|u(0)\|_{L^{2}}^{2},
\ee
This yields  that the $L^{2}$ norm of $u^{N}$ is controlled and $T_{n}=T$.
From the last inequality, by a diagonalization process, we can find a subsequence, again denoted by $u^{N}$, such that
$u^{N}\stackrel{\ast}{\rightharpoondown}u$ in $L^{\infty}((0,\,T);\,L^{2})$ and
$u^{N}\rightharpoondown u$ in $L^{2}((0,\,T);\,L^{2})$ as $N\rightarrow\infty$.

Just as the Navier-Stokes equations, one key point is to obtain  a strong convergence in $L^{2}((0,\,T);L^{2})$ to pass to the limit in the nonlinear term. The Aubin-Lions (\cite[Theorem 2.1, p.184]{[Teman2]}) Lemma allows us to achieve this. Thence, we turn to bound $\partial_{t}u^{N}$ via the equation
\be\label{eq1}
\partial_{t}u^{N}=\nu\Lambda^{2\alpha}u^{N}-P_{N}(u^{N}\cdot\nabla u^{N})-P_{N}\nabla p^{N}.
\ee
By   H\"older's inequality, integrating by parts and \eqref{app2}, for any $h\in L^{2}((0,\,T); H^{3})$, we find that
$$\ba
\int^{T}_{0}\langle\Lambda^{2\alpha}u^{N}(s), \,h(s) \rangle\, ds&\leq \int^{T}_{0}\|\Lambda^{\alpha}u^{N}(s)\|_{L^{2}}\|\Lambda^{\alpha}h(s) \|_{L^{2}}\,ds\\
&\leq \Big(\int^{T}_{0}\|\Lambda^{\alpha}u^{N}(s)\|_{L^{2}}^{2}\,ds\Big)^{1/2}
\Big(\int^{T}_{0}\|\Lambda^{\alpha}h(s)\|_{L^{2}}^{2}\,ds\Big)^{1/2}\\
&\leq \|u(0)\|_{L^{2}}\| h\|_{L^{2}((0,\,T);\,H^{3})}.
\ea$$
Applying H\"older's inequality and \eqref{app2}, we see that
\be\label{60}\ba
\int^{T}_{0}\langle P_{N}(u^{N}(s)\cdot\nabla u^{N}(s)),\,h(s) \rangle \,ds&\leq
\int^{T}_{0}\|u^{N}(s)\|_{L^{2}}\|u^{N}(s)\|_{L^{\frac{6}{3-2\alpha}}}
\|\nabla h(s)\|_{L^{\frac{3}{\alpha}}}\,ds\\
 &\leq C\|u^{N}\|_{L^{\infty}((0,\,T);\, L^{2})}
 \|\Lambda^{\alpha}u^{N}\|_{L^{2}((0,\,T);\,L^{2})}
\|h\|_{L^{2}(0,\,T;\,H^{3})} \\
 &\leq C\|u(0)\|_{L^{2}}\|h\|_{L^{2}((0,\,T);\,H^{3})},
\ea\ee
where we  used the   Sobolev embedding $H^{\alpha}\hookrightarrow L^{\frac{6}{3-2\alpha}}$ and
$ H^{2}\hookrightarrow L^{\frac{3}{\alpha}}$.\\
To bound $P_{N}\nabla p^{N}$, we
take divergence of \eqref{app1} to obtain the pressure equation
\be\label{pre}
\Delta P_{N}p^{N}=-\sum_{i,j}\partial_{i}\partial_{j}P_{N}(u_{i}^{N}u_{j}^{N}).
 \ee
In the light of  the classical Calder\'on-Zygmund Theorem, repeating the deduction process of \eqref{60}, we deduce that
$$\ba
\int^{T}_{0}\langle P_{N}\nabla p^{N}(s),h(s) \rangle \,ds &\leq
\int^{T}_{0}\|p^{N}(s)\|_{L^{\frac{3}{3-\alpha}}}
\|\nabla h(s)\|_{L^{\frac{3}{\alpha}}}\,ds\\
&\leq C\int^{T}_{0}|u^{N}(s)\|_{L^{2}}\|u^{N}(s)\|_{L^{\frac{6}{3-2\alpha}}}
\|\nabla h(s)\|_{L^{\frac{3}{\alpha}}}\,ds\\
&\leq
 C\|u(0)\|_{L^{2}}\|h\|_{L^{2}((0,\,T);\,H^{3})}.
\ea$$
Combining these estimates, we obtain
\be\label{al}
\partial_{t}u^{N}\in L^{2}((0,\,T);\,H^{-3}),\ee
which together with \eqref{app2} yields that
$$u^{N}\rightarrow u ~~~\text{in} ~~~L^{2}((0,\,T);L^{2}).$$
Taking the scalar product of \eqref{app1} with $\phi\in C_{0}^{\infty}((0,\,T)\times\Omega)$ and integrating, we have
$$\int_{0}^{T}\langle u^{N},\,\phi_{s}\rangle-
\nu\langle\Lambda^{\alpha}u^{N},\,\Lambda^{\alpha}\phi\rangle
-\langle u^{N}\cdot\nabla u^{N},\,\phi\rangle \,ds = -\langle u^{N}(0),\,\phi(0)\rangle.$$
Now we can pass to the limit in the above equations exactly as the standard Naveri-Stokes equations (for example \cite[p.196]{[Teman2]}) to show \eqref{dis1}.

In order to prove property (3),
we shall apply Arzel\`a-Ascoli theorem to pass to the limit
\be\label{app3}
\lim_{N\rightarrow\infty} \langle u^{N}(t),\varphi\rangle=\langle u (t),\varphi\rangle, ~~\text{for each}~~ t\in[0,T], ~\varphi\in L^{2}(\Omega),
\ee
hence,
it is enough to show that the function
$$
f_{N}(t)=\langle u^{N}(t),\varphi\rangle
$$
is uniformly bounded and equicontinuous  on $[0,T]$.\\
It follows from  \eqref{app2} that $f_{N}(t)$ is uniformly bounded.
Notice that $\varphi$  is independent of time,
arguing in the same manner as \eqref{al},
$$\ba
|f_{N}(t_{1})-f_{N}(t_{2})| &\leq \Big|\int^{t_{1}}_{t_{2}}\langle\partial_{s}u^{N}(s),\, \varphi\rangle \,ds\Big|\\
&\leq  C |t_{1}-t_{2}|^{1/2}\|\partial_{s}u^{N}\|_{L^{2}(0,T;\,H^{-3})}
\|\varphi\|_{H^{3}}\\
&\leq  C |t_{1}-t_{2}|^{1/2}\|u(0)\|_{L^{2}}\|\varphi\|_{H^{3}},
\ea$$
which implies that $f_{N}(t)$ equicontinuous when $\varphi\in H^{3}$.
In the light of Arzel\`a-Ascoli theorem, we infer that
$$
\lim_{N\rightarrow\infty} \langle u^{N}(t), \varphi\rangle=\langle u (t), \varphi\rangle, ~~\text{for each}~~ t\in[0,\,T],
$$
and  $f(t)=\langle u(t),\varphi\rangle$ is  continuous function where $\varphi\in H^{3}$.
With the help of the fact that  $H^{3}$ is dense
 in $ L^{2}$, by dense argument,  we  conclude that the weak solutions is  $L^{2}$ weakly continuous.

Making use of \eqref{app3}, we see that
\be\label{last1}\|u(t)\|_{L^{2}}=\sup_{\|v\|_{L^{2}}\leq1}\langle u(t),\,v\rangle=\sup_{\|v\|_{L^{2}}\leq1}\liminf_{N\rightarrow\infty}\langle u(t),\,v\rangle
\leq \liminf_{N\rightarrow\infty}\|u^{N}(t)\|_{L^{2}}, \ee
 for any $ t\in[0,T]$, which together with \eqref{app2} implies
$$\lim_{t\rightarrow0}\|u(t)\|_{L^{2}}\leq \|u(0)\|_{L^{2}}.$$
Similar  to \eqref{last1}, utilize the $L^{2}$ weak continuity to obtain
$$\|u(0)\|_{L^{2}}=\sup_{\|v\|_{L^{2}}\leq1}\langle u(0),\,v\rangle=
\sup_{\|v\|_{L^{2}}\leq1}\liminf_{t\rightarrow0}\langle u(t),\,v\rangle
\leq \liminf_{t\rightarrow0} \|u(t)\|_{L^{2}}.$$
Recalling that  parallelogram identity  holds in Hilbert space $L^{2}$, we know that $\lim\limits_{t\rightarrow0}\|u(t)\|_{L^{2}}=\|u(0)\|_{L^{2}}$  and $L^{2}$ weak continuity mean that $\lim\limits_{t\rightarrow0}\|u(t)-u(0)\|_{L^{2}}=0.$

At last, we check that the energy inequality corresponding to $s=0$ in \eqref{SEI2} and the strong energy inequality \eqref{SEI2} are valid.
On the one hand,
it  follows from \eqref{app2}, \eqref{last1} and  Fatou's Lemma that
$$ \|u(t)\|^{2}_{L^{2}}
+2\nu\int^{t}_{0}\|\Lambda^{\alpha} u(s)\|^{2}_{L^{2}}\,ds\leq \|u(0)\|^{2}_{L^{2}},~t\in
(0,\,T). $$
On the other hand, there exists a set $E\subset [0,T)$ whose Lebesgue measure is zero  and a subsequence (still denoted by the same symbol)
$u^{N}$
such that
$$
u^{N}(\tau)\rightarrow u(\tau) ~\text{as} ~N\rightarrow\infty ~~\text{in}~~ L^{2},~
\text{for each } \tau\in[0,\,T)/E
$$
due to the fact that $u^{N}(t)\in L^{2}([0,\,T);\,H^\alpha)$ and
$H^\alpha\hookrightarrow\hookrightarrow L^{2}$.
Based on this, we can pass to the limit of
$$
\frac{1}{2} \|u^{N}(t)\|_{L^{2}}^{2}+\nu\int^{t}_{\tau}\|\Lambda^{\alpha }u^{N}(s)\|_{L^{2}}^{2}\, ds=\frac{1}{2} \|u^{N}(\tau)\|_{L^{2}}^{2}, ~
$$
 to arrive at
 $$
\frac{1}{2} \|u(t)\|_{L^{2}}^{2}+\nu\int^{t}_{\tau}\|\Lambda^{\alpha }u(s)\|_{L^{2}}^{2}\,ds\leq\frac{1}{2} \|u(\tau)\|_{L^{2}}^{2},
$$
for almost every $ \tau\in[0,T)$ and each $t\in[\tau,\,T)$.

\end{proof}

\noindent
{\bf Acknowledgements:}
The authors would like to express their sincere gratitude to Gang Wu and
Jiefeng Zhao for many helpful discussions on this topic. The first author is supported by National
Natural Sciences Foundation of China (No. 11171229, No.11231006) and Project of Beijing Chang Cheng Xue Zhe.


\end{document}